\newtheorem{thm}{Theorem}[section]
\newtheorem{lm}[thm]{Lemma}
\newtheorem{Def}{Definition}[section]
\newtheorem{conj}{Conjecture}[section]
\newtheorem{prob}{Problem}[section]
\newtheorem{pro}[thm]{Proposition}
\newtheorem{cla}{Claim}
\newtheorem{cor}[thm]{Corollary}
\newcommand{\pf}{\noindent {\bf Proof.} }
\begin{document}

\title{Density conditions for $k$ vertex-disjoint triangles in tripartite graphs}
\date{}

\author{Mingyang Guo, Klas Markstr\"om}
\maketitle

\begin{abstract}
Let $n,k$ be positive integers such that $n\geq k$ and $G$ be a tripartite graph with parts $A,B,C$ such that $|A|=|B|=|C|=n$. Denote the edge densities of $G[A,B]$, $G[A,C]$ and $G[B,C]$ by $\alpha$, $\beta$ and $\gamma$, respectively. In this paper, we study  edge density conditions for the existence of $k$ vertex-disjoint triangles in a tripartite graph. For $n\geq 5k+2$ we give an optimal condition in terms of densities $\alpha,\beta,\gamma$ for the existence of $k$ vertex-disjoint triangles in $G$. We also give an optimal condition in terms of densities $\alpha,\beta,\gamma$ for the existence of a triangle-factor in $G$.
\end{abstract}

%----------------------------------------------------------------------------------------------------------------------------------------------------------------------------------
\section{Introduction}
Extremal problems for triangles in graphs are among the most well-studied in graph theory; starting with  Mantel's theorem, showing that any graph with more than $\frac{n^2}{4}$ edges has a triangle.  After Mantel's result  more detailed results for graphs above the existence threshold were sought.  The first such was a result by Rademacher,  who found the minimum number of triangles in a graph with $\frac{n^2}{4}+1$  edges.  Motivated by Rademacher's theorem Erd\H{os} \cite{MR81469} asked for the minimum number of triangles as a function of the number of edges, a problem which became known as the Erd\H{os}-Rademacher problem.  A long  line of papers culminated with Razborov \cite{R2} who found the asymptotic form for the minimum possible number of triangles in a graph with a given number of edges.  Nikiforov generalised this to $K_4$s instead of triangles, and Reiher  \cite{MR3549620} to cliques of any fixed size.   Liu, Pikhurko and Staden \cite{MR4089395} sharpened Razborov's result to give the exact value for the minimum number of triangles and the structure of the extremal graphs. Later \cite{MR4196784} found the structure of the extremal graphs for all cliques.   Generalising in another direction \cite{FMZ21}, investigated the number of edges required to guarantee that some vertex lies in at least $t$ triangles,  and determined the exact number of edges for  tripartite graphs.  
 
 Instead, looking for disjoint triangles,  Erd\H os \cite{E62} studied the edge density condition for the existence of $k$ vertex-disjoint triangles. Let $EX(n,F)$ denote the collection of all $n$-vertex $F$-free graphs with the maximum number of edges. Denote the balanced $r$-partite graph with $n$ vertices by $T(n,r)$. Given two graphs $G$ and $H$ with disjoint vertex set, we use $G\vee H$ to denote the graph with vertex set $V(G)\cup V(H)$ and edge set $\{xy:x\in V(G),y\in V(H)\}$. Let $kF$ be the graph  which consist of $k$ vertex-disjoint copies of $F$.
\begin{thm}[Erd\H os \cite{E62}]
	Suppose that $n,k$ are positive integers satisfying $n\geq  400 k^2$. Then 
	\begin{equation}\label{density-graph}
		EX(n,(k+1)K_3)=\{K_t\vee T(n-k,2)\}.
	\end{equation}
\end{thm}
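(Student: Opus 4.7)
\medskip
\noindent\textbf{Proof plan.} I would proceed by induction on $k$, with base case $k=0$ being Mantel's theorem (together with the uniqueness of $T(n,2)$ as the extremal graph).

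For the lower bound, $K_k\vee T(n-k,2)$ has exactly $\binom{k}{2}+k(n-k)+\lfloor(n-k)^2/4\rfloor$ edges and is $(k+1)K_3$-free: $T(n-k,2)$ is bipartite, so every triangle must contain one of the $k$ apex vertices, and hence no $k+1$ vertex-disjoint triangles can fit.

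For the upper bound, suppose $G$ has $n\ge 400k^2$ vertices, $e(G)>e(K_k\vee T(n-k,2))$, and no $(k+1)K_3$; the goal is a contradiction. Since the threshold is monotone in $k$, the inductive hypothesis applied at $k-1$ produces $k$ vertex-disjoint triangles $T_1,\ldots,T_k$ in $G$. Among all such families I would choose one maximizing $e(G[U])$, where $U:=V(G)\setminus\bigcup_i V(T_i)$ has $n-3k$ vertices. By assumption $G[U]$ is triangle-free, so $e(G[U])\le\lfloor(n-3k)^2/4\rfloor$ by Mantel.

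The core argument is a local swap. If $u\in U$ is adjacent to two vertices $a,b$ of $T_i=\{a,b,c\}$, then $\{u,a,b\}$ is a triangle; replacing $T_i$ by it gives $U'=(U\setminus\{u\})\cup\{c\}$, and maximality of $e(G[U])$ forces $|N(c)\cap U|\le d_{G[U]}(u)+1$. Pointwise inequalities like this, together with an analogous constraint on edges across distinct $T_i,T_j$ (otherwise a two-step swap unlocks $k+1$ vertex-disjoint triangles), combine — after a careful double count — to bound the edges incident to $S:=\bigcup_i V(T_i)$ by $\binom{k}{2}+k(n-k)+O(k^2)$. Adding the Mantel bound on $e(G[U])$, the slack in $n\ge 400k^2$ absorbs the $O(k^2)$ error and contradicts $e(G)>e(K_k\vee T(n-k,2))$. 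For the uniqueness part in (\ref{density-graph}), I would chase the equality cases through the induction using the uniqueness in Mantel's theorem and the uniqueness of the swap-minimizer.

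The main obstacle I expect is bookkeeping: the pointwise exchange inequalities are easy in isolation, but making them sum to an extremal-value-exact bound is delicate, and one typically has to refine the choice of family (e.g.\ lexicographically on the pair $(e(G[U]),\sum_i|N(V(T_i))\cap U|)$) so that sequential swaps do not cancel. This is also the reason a quadratic-in-$k$ lower bound on $n$ is needed rather than a linear one.
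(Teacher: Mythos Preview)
This theorem is not proved in the paper: it is Erd\H{o}s's 1962 result, cited from \cite{E62} as background in the introduction, and the paper provides no proof of its own to compare against. (Note also the typo in the paper's statement---$K_t$ should read $K_k$---which you silently corrected.)

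As a standalone sketch your plan is reasonable and is in the spirit of the original argument: induction on $k$ with Mantel as base case, pick a maximal (or optimally chosen) family of $k$ disjoint triangles, bound edges inside the remainder by Mantel, and control the edges touching the triangle family by swap inequalities. The place where your outline is genuinely incomplete is the claim that the pointwise swap bounds ``combine---after a careful double count---to bound the edges incident to $S$ by $\binom{k}{2}+k(n-k)+O(k^2)$.'' This is the entire content of the theorem, and the swap you describe (replacing $T_i$ by $\{u,a,b\}$ and comparing $|N(c)\cap U|$ to $d_{G[U]}(u)+1$) only controls one of the three vertices of $T_i$ at a time; getting the sum over all three vertices of all $k$ triangles to come out right requires either a more global counting (e.g.\ counting cherries from $U$ into $S$) or a sharper choice of the family than ``maximize $e(G[U])$.'' Your parenthetical about refining the optimization lexicographically acknowledges this, but as written the plan does not actually exhibit the inequality that closes the argument, nor does it say how the cross-edges between different $T_i,T_j$ are bounded beyond a vague ``two-step swap.'' These are exactly the steps where Erd\H{o}s's proof does real work, so the plan is a correct skeleton but not yet a proof.
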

Later Moon \cite{M68} improved the result by showing that (\ref{density-graph}) holds for $n\geq 9k/2+4$. In 2015, Allen, B\"ottcher, Hladk\'y and Piguet \cite{ABHP15}  extended the result to  $n\geq 3k$ for sufficiently  large $n$ and their result shows that there are four different extremal constructions, for four different ranges of $k$. Recently, Hou, Hu, Liu and Zhang \cite{HHLZ25} determined asymptotically  the edge density condition for the existence of $k$ vertex-disjoint $K_4$. Simonovits \cite{S68} and Moon \cite{M68} showed that if $n$ is sufficiently large and $t$ is fixed, then $K_t\vee T(n-k,r)$ is the unique extremal graph for $ kK_{r+1}$.

Motivated by a question of Erd\H{o}s on triangle-free subgraphs  Bondy, Shen,Thomass\'e and Thomassen \cite{BSTT06}  investigated a tripartite variation of Mantel's theorem. Let $G=(V,E)$ be a tripartite with $3$-partition $V=V_1\cup V_2\cup V_3$. A \textit{weighted tripartite graph} $(G,w)$ is a tripartite graph $G=(V,E)$ together with a \textit{weighting} $w:V \rightarrow [0,1]$ satisfying $$\sum_{a\in V_1}w(a)=\sum_{b\in V_2}w(b)=\sum_{c\in V_3}w(c)=1.$$ The \textit{weight} of an edge $xy\in E(G)$ is $w(xy)=w(x)w(y)$. Let $G[V_i,V_{i+1}]$ denote the bipartite graph induced by $V_i\cup V_{i+1}$ for $i\in [3]$. The \textit{edge density} between two parts of $(G,w)$ is $d(G[V_i,V_j])=\sum_{ab\in E(G[V_i,V_{i+1}])}w(ab)$ for $i\in [3]$. A tripartite graph can be regarded as a weighted tripartite graph by setting the vertex weight to be $1/|V_1|$, $1/|V_2|$, $1/|V_3|$ for vertices in classes $V_1$, $V_2$, $V_3$, respectively.

Let $\alpha:=d(G[V_1,V_2])$, $\beta:=d(G[V_1,V_3])$ and $\gamma:=d(G[V_2,V_3])$. We say that $\alpha, \beta, \gamma$ is a \emph{cyclic triple} if the following conditions are satisfied:
\begin{equation*}
	\alpha\beta+\gamma>1, \ \beta\gamma+\alpha>1 \ \text{and} \ \gamma\alpha+\beta>1.
\end{equation*}
If the edge densities of $G$ form a cyclic triple, then $G$ is said to be \emph{cyclic}. Bondy, Shen, Thomass\'e and Thomassen \cite{BSTT06} proved the following theorem.
\begin{thm}[Bondy, Shen, Thomass\'e and Thomassen \cite{BSTT06}]\label{induction-base}
	If $G$ is cyclic, then it contains a triangle.
\end{thm}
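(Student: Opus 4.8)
\noindent The plan is to prove the contrapositive: assuming $G$ is triangle-free, show that $\alpha,\beta,\gamma$ is \emph{not} a cyclic triple, i.e.\ at least one of $\alpha\beta+\gamma\le 1$, $\beta\gamma+\alpha\le 1$, $\gamma\alpha+\beta\le 1$ holds. For a vertex $v$ and a part $V_i$, write $d_i(v)$ for the total weight $\sum_{u\in N(v)\cap V_i}w(u)$, so that $\alpha=\sum_{a\in V_1}w(a)d_2(a)$, $\beta=\sum_{a\in V_1}w(a)d_3(a)$, $\gamma=\sum_{b\in V_2}w(b)d_3(b)$. The single structural fact forced by triangle-freeness is: whenever $vv'\in E(G)$ with $v\in V_j$, $v'\in V_k$, the neighbourhoods $N(v)\cap V_i$ and $N(v')\cap V_i$ are disjoint, so $d_i(v)+d_i(v')\le 1$; equivalently, for every $v\notin V_i$ the ``rectangle'' $(N(v)\cap V_j)\times(N(v)\cap V_k)$ contains no edge of $G[V_j,V_k]$.

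I would begin by extracting the easy linear consequences. Let $Q_i:=\sum_{v\in V_i}w(v)\,d_j(v)\,d_k(v)$ be the weighted density of cherries centred in $V_i$. Summing $d_i(v)+d_i(v')\le1$ over the edges $vv'$ of $G[V_j,V_k]$, weighted by $w(v)w(v')$, gives $Q_1+Q_2\le\alpha$, $Q_1+Q_3\le\beta$, $Q_2+Q_3\le\gamma$. Drawing $(a,b,c)$ independently from the weight distributions on $V_1,V_2,V_3$ and using that the probability of obtaining a triangle is at least $\Pr[ab,ac\in E]-\Pr[bc\notin E]=Q_1-(1-\gamma)$, together with its two cyclic variants, gives $Q_1\le1-\gamma$, $Q_2\le1-\beta$, $Q_3\le1-\alpha$. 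These six inequalities do not by themselves rule out a cyclic triple (they hold with $Q_1=Q_2=Q_3=0$), but the case $Q_1=Q_2=Q_3=0$ is precisely that of a \emph{locally bipartite} $G$ --- every vertex has all of its neighbours in one part --- and that case is finished by hand: writing $x,y,z\in[0,1]$ for the weights of the three relevant ``type'' classes one gets $\alpha\le xy$, $\beta\le(1-x)z$, $\gamma\le(1-y)(1-z)$, whence $\alpha\beta+\gamma\le x(1-x)yz+(1-y)(1-z)\le 1$.

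For the remaining cases I would aim to reduce $G$ to the extremal shape by symmetrisation. Cloning a vertex inside a part --- replacing it by a twin of another vertex of the same part --- never creates a triangle, and it does not decrease any density provided the twin \emph{dominates} the cloned vertex in both of its foreign-degree coordinates. The goal is to show that, after performing such moves, $G$ becomes a blow-up of a triangle-free graph on a bounded number of vertices which, for the tripartite triangle problem, can only be $C_5$. Once $G$ is a blow-up of $C_5$, one picks the proper $3$-colouring of $C_5$ whose singleton colour class is $V_3$; with $s,t\in[0,1]$ the two free blow-up weights one computes $\beta=s$, $\gamma=t$, $\alpha=1-st$, hence $\beta\gamma+\alpha=st+(1-st)=1$, and since the symmetrisation only increased densities we get $\beta\gamma+\alpha\le 1$ for the original $G$, as required.

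The genuine obstacle is this last reduction. Cloning is harmless for triangle-freeness, but the moves must be chosen \emph{simultaneously and compatibly across all three parts} so that none of $\alpha,\beta,\gamma$ ever drops --- which is delicate because each part carries two foreign-degree coordinates at once --- and one must then show that the only triangle-free tripartite graphs for which no such improving move remains are the blow-ups of $C_5$. An alternative route, avoiding cloning, is to analyse directly how large an independent rectangle $(N(c)\cap V_1)\times(N(c)\cap V_2)$ ($c\in V_3$) can be inside the bipartite graph $G[V_1,V_2]$ of density $\alpha$, and then to combine these bounds over $c\in V_3$ by a convexity argument; but proving the required convexity/compression property of the ``largest rectangle avoiding all edges'' function of a bipartite graph is where the real work lies on that route as well.
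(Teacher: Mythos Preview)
This theorem is not proved in the present paper; it is quoted from \cite{BSTT06} and used as a black box (as the base case underlying Theorem~\ref{main-theorem}). So there is no in-paper argument to compare your attempt against.

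On its own merits: what you have written is a plan, not a proof, and you are candid about where it stops. The linear consequences you extract ($Q_i+Q_j$ bounded by a density, and $Q_i\le 1-\text{(opposite density)}$) are correct, and your treatment of the degenerate case $Q_1=Q_2=Q_3=0$ is fine. But the central reduction --- that any triangle-free weighted tripartite graph can be pushed, by cloning moves that never decrease any of the three densities, to a blow-up of $C_5$ --- is exactly where the difficulty lies, and you have not supplied it. For the move ``replace $v$ by a twin of $u$'' (same part) to be admissible you need $u$ to dominate $v$ in \emph{both} foreign-degree coordinates simultaneously; a triangle-free tripartite graph in which, inside each part, the foreign-degree vectors form an antichain in the product order admits no such move, and it is not evident that every such antichain configuration is already a $C_5$-blow-up or a degeneration thereof. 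At minimum one must pass to continuous weight-shifting and analyse the critical points of the resulting constrained optimisation, which is work of the same order as the theorem itself.

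For reference, the original proof in \cite{BSTT06} is short and direct; it does not go through symmetrisation or any structural reduction to $C_5$. Your ``alternative route'' in the last paragraph --- exploiting, for each $c\in V_3$, the edge-free rectangle $N_1(c)\times N_2(c)$ inside $G[V_1,V_2]$ and then combining over $c$ --- is much closer in spirit to what Bondy, Shen, Thomass\'e and Thomassen actually do than the cloning approach, and is the more promising line if you want to complete an argument.
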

For a given tripartite graph $G$ with parts $V_1,V_2,V_3$, denote its \emph{bipartite density} $d(G)$ by $$d(G):=\min_{1\leq i<j\leq 3}\frac{|E(G[V_i,V_j])|}{|V_i|\cdot |V_j|}.$$ 
Theorem \ref{induction-base} implies that any tripartite graph $G$ with $d(G)>\tau=\frac{\sqrt{5}-1}{2}$ must contain a triangle. 

In \cite{BJT10}  Baber, Johnson and Talbot investigated a tripartite version of Razborov's \cite{R2} counting result,   giving a lower bound for the number of $K_3s$ as a function of the  edge densities in a tripartite graph.  They determined the exact minimum for a range of densities and conjectured an exact bound for the remaining range. In 2021, Markstr\"om and Thomassen \cite{MT21} generalized these results to uniform hypergraphs. They determined the edge density needed to guarantee a copy of $K^{(r)}_{r+1}$, the complete $r$-uniform hypergraph on $r+1$ vertices, in an $(r+1)$-partite $r$-uniform hypergraph and  presented a sharp lower bound for the minimum number of copies of $K^{(r)}_{r+1}$ as a function of the edge densities.

In this paper we will follow up on both  Erd\H os \cite{E62} study of sets of vertex-disjoint triangles and the results on tripartite densities in \cite{BSTT06} by finding edge density conditions for the existence of $k$ vertex-disjoint triangles in a tripartite graph. 

For $1\geq\alpha,\beta,\gamma\geq 0$ and $n>k\geq 1$, we say $\alpha,\beta,\gamma$ is a \textit{$(k,n)$-cyclic triple} if inequalities (\ref{cyclic-1}), (\ref{cyclic-2}) and (\ref{cyclic-3}) hold, where:
\begin{equation}\label{cyclic-1}
	\left\{
	\begin{aligned}
		&\beta\left(\alpha-\frac{k-1}{n}\right)+\gamma>1,\\
		&\alpha\left(\beta-\frac{k-1}{n}\right)+\gamma>1.
	\end{aligned}
	\right.
\end{equation}

\begin{equation}\label{cyclic-2}
	\left\{
	\begin{aligned}
		&\gamma\left(\alpha-\frac{k-1}{n}\right)+\beta>1,\\
		&\alpha\left(\gamma-\frac{k-1}{n}\right)+\beta>1.
	\end{aligned}
	\right.
\end{equation}

\begin{equation}\label{cyclic-3}
	\left\{
	\begin{aligned}
		&\gamma\left(\beta-\frac{k-1}{n}\right)+\alpha>1,\\
		&\beta\left(\gamma-\frac{k-1}{n}\right)+\alpha>1.
	\end{aligned}
	\right.
\end{equation}

Our main result is the following theorem:
\begin{thm}\label{main-theorem}
	Let $n,k$ be integers such that $k\geq 2$ and $n\geq 5k+2$, and let $G$ be a tripartite graph with parts $A$, $B$ and $C$ such that $|A|,|B|,|C|= n$. 
	Let  $\alpha$, $\beta$ and $\gamma$ denote the edge densities of $G[A,B]$, $G[A,C]$ and $G[B,C]$, respectively. 
	
	If $\alpha, \beta,\gamma$ is a $(k,n)$-cyclic triple, then $G$ contains $k$ vertex-disjoint triangles. 
\end{thm}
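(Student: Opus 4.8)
The plan is to prove Theorem~\ref{main-theorem} by induction on $k$, removing one triangle at a time. The base case $k=1$ is exactly Theorem~\ref{induction-base}, since a $(1,n)$-cyclic triple is precisely a cyclic triple (the terms $\tfrac{k-1}{n}$ all vanish). For the inductive step it suffices to establish the following reduction: \emph{if $\alpha,\beta,\gamma$ is a $(k,n)$-cyclic triple with $k\ge 2$ and $n\ge 5k+2$, then $G$ contains a triangle $T$ such that the three edge densities of the tripartite graph $G-V(T)$ (whose parts have size $n-1$) form a $(k-1,n-1)$-cyclic triple.} Granting this, since $n-1\ge 5(k-1)+2$, we may apply the induction hypothesis — or Theorem~\ref{induction-base} when $k-1=1$ — to $G-V(T)$ to obtain $k-1$ vertex-disjoint triangles there, and adjoining $T$ gives the required $k$ triangles in $G$.

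Next I would set up the bookkeeping for the reduction. Deleting a triangle $T=xyz$ with $x\in A,\ y\in B,\ z\in C$ destroys exactly $a:=e(x,B)+e(A,y)-1$ edges of $G[A,B]$ (the edge $xy$ being the unique one counted twice), and analogously $b:=e(x,C)+e(A,z)-1$ edges of $G[A,C]$ and $c:=e(y,C)+e(B,z)-1$ edges of $G[B,C]$; hence the new densities are $\alpha'=\frac{\alpha n^2-a}{(n-1)^2}$, and similarly $\beta',\gamma'$. The hypothesis forces $\alpha-\tfrac{k-1}{n},\ \beta-\tfrac{k-1}{n},\ \gamma-\tfrac{k-1}{n}>0$ (e.g.\ from (\ref{cyclic-1}), $\beta(\alpha-\tfrac{k-1}{n})>1-\gamma\ge 0$ forces $\beta>0$ and $\alpha>\tfrac{k-1}{n}$). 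Since $\tfrac{k-2}{n-1}<\tfrac{k-1}{n}$, one checks readily that \emph{all six} $(k-1,n-1)$-cyclic inequalities for $\alpha',\beta',\gamma'$ follow once $\alpha'\ge\alpha$, $\beta'\ge\beta$, $\gamma'\ge\gamma$; equivalently, it suffices to choose $T$ whose three vertices have, into each of the two other classes, degree at most roughly the average (concretely $a\le\alpha(2n-1)$, etc.). When one of the inequalities (\ref{cyclic-1})--(\ref{cyclic-3}) is only barely satisfied one cannot quite achieve this and must instead expand $\beta'(\alpha'-\tfrac{k-2}{n-1})+\gamma'-1$ in full and play the (favourable) change of the threshold term and the cross terms against the (small) density losses, using the strictness of the hypothesis.

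So the real task is to exhibit such a triangle, and this I would do by a case analysis driven by the sizes of $\alpha,\beta,\gamma$. If $\alpha,\beta,\gamma$ is bounded away from the boundary of the cyclic region (so each of the six inequalities has slack bounded below by an absolute constant), then $G$ has $\Omega(n^3)$ triangles, and a suitably weighted averaging over triangles produces one all of whose vertices have near-average degree into the other two classes, which suffices by the previous paragraph. If instead $\alpha,\beta,\gamma$ lies near the boundary, the hypothesis pins down a near-extremal structure — one bipartite pair, say $G[B,C]$, has density close to $1$ while $\alpha+\beta$ is close to $1+\tfrac{k-1}{n}$ — which is exactly the configuration witnessing optimality of the theorem ($k-1$ vertices of $A$ joined to everything, the rest a triangle-free tripartite graph). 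Here $G[B,C]$ being nearly complete makes essentially every vertex of $A$ that has a neighbour in $B$ and a neighbour in $C$ lie in a triangle, so one may pick $x\in A$ of small $B$-degree and then a neighbour $y\in B$ of small $A$-degree (both possible because $\alpha$ may be small in this regime) and any $z\in N_C(x)$; deleting $xyz$ destroys few $G[A,B]$- and $G[A,C]$-edges, while the many $G[B,C]$-edges destroyed are harmless because $\gamma'$ stays near $1$, and strictness of the six inequalities supplies exactly the margin required. The cases with $\alpha$ or $\beta$ near $1$ are symmetric, and the remaining intermediate regimes reduce to one of these.

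I expect the main obstacle to be precisely this near-extremal case. There the slack in the hypothesis can be as small as $\Theta(1/n)$, so no density may be allowed to drop by more than $\Theta(1/n)$; making the triangle-deletion work then requires an exact structural analysis of a tripartite graph one of whose bipartite densities is nearly $1$, together with (essentially) the equality case of Theorem~\ref{induction-base}, and a careful count in which the hypothesis $n\ge 5k+2$ is used both to make the arithmetic go through and to guarantee that one can still find the desired triangle after avoiding the $O(k)$ exceptional vertices coming from the near-extremal structure. An alternative organisation that avoids the induction is to take a maximum family of vertex-disjoint triangles that, subject to maximality, minimises the total degree of its vertices, and to derive a contradiction from Theorem~\ref{induction-base} applied to the triangle-free remainder if the family has fewer than $k$ members; the degree-minimality (``shifting'') plays the same role there as the careful choice of $T$ above.
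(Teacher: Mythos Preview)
Your primary inductive approach --- delete a well-chosen triangle and invoke the $(k-1,n-1)$ case --- is not what the paper does, and the near-extremal case you flag as ``the main obstacle'' is genuinely left open in your write-up. Concretely: in the extremal configuration described in the Remark after the statement, every triangle passes through one of the $k-1$ ``hub'' vertices of $B_2$, each of which is complete to both $A$ and $C$. Any triangle you delete therefore removes a vertex of full degree in two of the three bipartite graphs, so your sufficient condition $\alpha'\ge\alpha$, $\beta'\ge\beta$, $\gamma'\ge\gamma$ is unattainable there; no averaging over triangles can produce one with all near-average degrees, since no such triangle exists. You are then forced into the ``expand in full and play off the threshold shift against the losses'' regime for every one of the six inequalities simultaneously, and this is precisely what you have not done --- the phrases ``structural analysis'', ``suitably weighted averaging'', and ``the remaining intermediate regimes reduce to one of these'' are placeholders. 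The scheme may well be salvageable, but as written it is a plan with its hardest step missing.

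The paper instead argues globally from a maximal family $\mathcal{T}=\{T_1,\dots,T_{k-1}\}$, closer in spirit to your closing alternative but with a different mechanism than degree-minimal shifting. Call $T_i\in\mathcal{T}$ \emph{good} if some vertex $v_i\in T_i$ is seen by three pairwise disjoint edges of $G-V(\mathcal{T})$; delete the set $V_1=\{v_i\}$ of these centres to obtain $G'$, and use Lemmas~\ref{delete1}--\ref{delete3} to show the bipartite densities of $G'$ still satisfy a $(k-1-|V_1|)$-type cyclic inequality. Any triangle of $G'$ that avoids the non-good $T_i$ could be completed (via the three reserved edges at each $v_i$) to $k$ disjoint triangles in $G$, so no such triangle exists (Claim~\ref{s2}); every remaining triangle of $G'$ meets $V(\mathcal{T}_2)\cup(V(\mathcal{T}_1)\setminus V_1)$, and a local intersecting-family analysis (Lemma~\ref{intersecting-triangle} together with Claim~\ref{s3}) shows all of them can be destroyed by removing at most $(k-1-|V_1|)(k+1)$ edges from each bipartite pair. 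Since $n\ge 5k+2$, this $O(k^2)$-edge deletion still leaves the densities of $G'$ cyclic, so Theorem~\ref{induction-base} yields a triangle in the edge-deleted graph --- a contradiction. The paper thus bypasses the delicate single-triangle accounting entirely by covering the residual triangle set with a small edge set rather than peeling triangles one at a time.
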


\noindent\textbf{Remark:}
The conditions in (\ref{cyclic-1}), (\ref{cyclic-2}) and (\ref{cyclic-3}) are best possible. For instance, there is a tripartite graph $G$ with densities $\alpha,\beta,\gamma$ satisfying $\beta\left(\alpha-\frac{k-1}{n}\right)+\gamma=1$ and $G$ does not contain $k$ vertex-disjoint triangles. To see this, consider a tripartite graph $G$ with parts $A$, $B$ and $C$. Denote the edge densities of $G[A,B]$, $G[A,C]$ and $G[B,C]$ by $\alpha$, $\beta$ and $\gamma$, respectively. Let $B_1$ be a subset of $B$ such that $|B_1|=\frac{1-\gamma}{\beta}|B|$, $B_2$ be a subset of $ B\setminus B_1$ such that $|B_2|=(\alpha-\frac{1-\gamma}{\beta})|B|$ and $B_3=B\setminus(B_1\cup B_2)$. Let $C_1$ be a subset of $C$ such that $|C_1|=\beta |C|$ and $C_2=C\setminus C_1$. Define the edge set of $G$ to be the set of all edges between $A$ and $B_1\cup B_2$, $A$ and $C_1$, $B_2\cup B_3$ and $C$, $B_1$ and $C_2$.  

We believe that the condition on the ratio between $n$ and $k$  in Theorem \ref{main-theorem} can be improved.
\begin{conj}\label{conj1}
	The bound $n\geq 5k+2$ in Theorem \ref{main-theorem} can be improved to $n>C k$ for any constant $C>1$, for large enough $n$.
\end{conj}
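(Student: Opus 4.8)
\medskip
\noindent\textbf{Proof proposal for Conjecture \ref{conj1}.}
Theorem \ref{main-theorem} already gives the conclusion when $n\geq 5k+2$, so it suffices to treat graphs with $Ck<n<5k+2$; in particular we may take $1<C<5$, so that $k=\Theta(n)$. This is precisely the regime in which the ``remove one triangle, recurse'' scheme behind Theorem \ref{main-theorem} is too wasteful: after deleting $\Theta(n)$ triangles the densities of the leftover can fall below the threshold of Theorem \ref{induction-base}. The plan is to replace it by a stability dichotomy together with the absorption method. Write $\nu(G)$ for the largest number of pairwise vertex-disjoint triangles in $G$, and let $\mathcal{E}_{k,n}$ be the family consisting of the construction in the Remark following Theorem \ref{main-theorem} together with its images under permuting the roles of $A,B,C$ (equivalently of $\alpha,\beta,\gamma$); each member of $\mathcal{E}_{k,n}$ has a single ``bottleneck'' class --- the analogue of $B_2$ --- of size exactly $k-1$ that meets every triangle, so that $\nu=k-1$ there. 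Fix $C\in(1,5)$, assume $n$ large, $n>Ck$, and that $\alpha,\beta,\gamma$ is a $(k,n)$-cyclic triple (all six inequalities strict); fix a small $\eta_0=\eta_0(C)>0$ and split into two cases.

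In the \emph{robust case}, where all six $(k,n)$-cyclic inequalities hold with slack at least $\eta_0$, I would run an absorption argument. First isolate a family $\mathcal{A}$ of $t_0=o(n)$ pairwise vertex-disjoint triangles with the absorbing property that $V(\mathcal{A})\cup R$ has a triangle decomposition for every balanced $R$ with $|R\cap A|=|R\cap B|=|R\cap C|\leq t_0$; this can be built in the standard way from the supersaturation supplied by the $\eta_0$-slack (each absorbable vertex lies in $\Omega(n^2)$ candidate gadgets, so a random selection works). Next, in $G-V(\mathcal{A})$ --- which still satisfies a $(k',n')$-cyclic-type condition robustly, with $n'=n-o(n)$, $k'=k-o(n)$ and $n'>(1+\tfrac12(C-1))k'$ --- produce a triangle packing covering all but a balanced set of at most $t_0$ vertices per part. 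Since greedy removal is too lossy here, I expect this step to pass through a fractional relaxation: use the $\eta_0$-slack to build a fractional vertex-disjoint triangle tiling of $G-V(\mathcal{A})$ of weight at least $k'$ (working in the weighted tripartite model of the paper, where the discount $\tfrac{k-1}{n}$ is natural), then round to an integral packing missing only $o(n)$ vertices per part via a fractional-to-integral conversion in the dense regime (regularity method, Haxell--R\"odl type rounding). Finally absorb the remainder with $\mathcal{A}$, yielding at least $k$ vertex-disjoint triangles.

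In the \emph{near-extremal case}, where some inequality holds with slack less than $\eta_0$, I would first prove a quantitative stability statement: if moreover $\nu(G)\leq k-1$, then $G$ differs from some member of $\mathcal{E}_{k,n}$ in at most $\rho(\eta_0)n^2$ edges, with $\rho(\eta_0)\to 0$ as $\eta_0\to0$; this should come out of re-running the weight-shifting/counting proofs of Theorems \ref{induction-base} and \ref{main-theorem} and analysing the near-equality cases. Given such a $G$, its bottleneck class is a well-defined set of size $b$ that agrees up to $O(\rho n)$ with the predicted bottleneck size (the analogue of $(\alpha-\tfrac{1-\gamma}{\beta})n$); strictness of the nearly-tight inequality forces the predicted size to exceed $k-1$, so, since $b$ is an integer within $O(\rho n)$ of it, $b\geq k$ once $\rho$ is small enough. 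The $\Omega(n)$ vertices lying outside the bottleneck --- this is where $n>Ck$ is used --- then allow us to complete $k$ vertex-disjoint triangles in $G$, contradicting $\nu(G)\leq k-1$.

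I expect the stability step in the near-extremal case to be the main obstacle, and in particular making $\rho(\eta_0)$ quantitatively strong enough: the $(k,n)$-cyclic condition is only an open condition, so in a fixed instance the slack may be as small as $\Theta(1/n^4)$, and one must rule out that a perturbation of $\rho(\eta_0)n^2$ edges re-creates a triangle-blocking bottleneck of size $k-1$. This is exactly the point at which $n>Ck$ with $C>1$ --- rather than $n=(1+o(1))k$ --- appears to be essential: as $k/n\to1$ the room outside the bottleneck shrinks to a vanishing linear fraction of $n$ and the problem degenerates into the triangle-factor problem, which would instead require genuinely factor-type techniques. A secondary technical point is the almost-perfect packing in the robust case, where one must verify that the $(k,n)$-cyclic condition with uniform slack really forces a fractional triangle tiling of weight at least $k$ --- which must fail for the configurations in $\mathcal{E}_{k,n}$ --- and this is likely cleanest in the paper's weighted tripartite formalism rather than via the triangle-cover linear program.
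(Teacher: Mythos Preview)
This statement is a \emph{conjecture} in the paper, not a theorem: the authors state it immediately after Theorem~\ref{main-theorem} and explicitly leave it open. There is no proof in the paper to compare your proposal against.

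As for the proposal itself, it is a programme rather than a proof, and you are candid about this (``I would run'', ``I expect'', ``should come out of''). Two remarks on the outline. First, in the robust case the full absorption machinery looks heavier than the target: you are not seeking a perfect or near-perfect tiling of $G-V(\mathcal{A})$, only $k'\le (1-\varepsilon)n$ vertex-disjoint triangles, so there is nothing to ``absorb'' in the usual sense; with uniform slack $\eta_0$ it would be more natural to try to adapt the paper's own argument for Theorem~\ref{main-theorem} directly, since the only place the constant $5$ enters that proof is the final inequality $\alpha'(n-(k-2))-(2\alpha'+1)(k+1)\ge 0$, and $\eta_0$-slack should buy you a nontrivial lower bound on $\alpha'$ and hence a smaller constant. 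Second, your own diagnosis of the near-extremal case is the real issue: the $(k,n)$-cyclic condition is open, so the slack in a given instance can be as small as $\Theta(n^{-2})$, and a stability statement with error $\rho(\eta_0)n^2$ edges cannot by itself control integer quantities like the bottleneck size to within $\pm 1$ when $\eta_0$ is allowed to be that small. Bridging that gap --- turning an $o(n^2)$-edge stability into an exact structural conclusion that the bottleneck has size at least $k$ --- is where the actual work would lie, and your sketch does not yet indicate how to do it.
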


Let $\tau_k(n)$ denote the positive solution of the equation $nx^2+(n+1-k)x-n=0$. Then Theorem \ref{main-theorem} implies
\begin{cor}\label{cor}
	Let $n,k$ be integers such that $k\geq 2$ and $n\geq 5k+2$, and let $G$ be a tripartite graph with parts $A,B,C$ such that $|A|,|B|,|C|=n$. If $d(G)>\tau_k(n)$, then $G$ contains $k$ vertex-disjoint triangles.
\end{cor}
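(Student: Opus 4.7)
The plan is to reduce Corollary \ref{cor} directly to Theorem \ref{main-theorem} by showing that the hypothesis $d(G)>\tau_k(n)$ forces $\alpha,\beta,\gamma$ to be a $(k,n)$-cyclic triple, after which the theorem supplies the $k$ vertex-disjoint triangles.

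The starting observation is that $\tau:=\tau_k(n)$ is by definition the positive root of $nx^2+(n+1-k)x-n=0$, which after dividing by $n$ and rearranging can be rewritten as
\begin{equation*}
\tau\Bigl(\tau-\tfrac{k-1}{n}\Bigr)+\tau \;=\; 1.
\end{equation*}
Thus if we substitute $\alpha=\beta=\gamma=\tau$ into the left-hand side of any of the six inequalities defining a $(k,n)$-cyclic triple in (\ref{cyclic-1}), (\ref{cyclic-2}), (\ref{cyclic-3}), we obtain exactly $1$. So $\tau_k(n)$ is precisely the symmetric equality threshold for the cyclic-triple condition.

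It remains to show that strict inequality $\min(\alpha,\beta,\gamma)>\tau$ pushes each of the six left-hand sides strictly above $1$. For this I would argue by monotonicity. First I verify $\tau>(k-1)/n$: evaluating the polynomial $f(x)=nx^2+(n+1-k)x-n$ at $x=(k-1)/n$ yields, after a short simplification, $(k-1)-n$, which is negative because $n\geq 5k+2>k-1$; since $f$ is positive for large $x$, its positive root $\tau$ exceeds $(k-1)/n$. Consequently, for $\alpha,\beta,\gamma>\tau$, each factor of the form $\bigl(\cdot-\tfrac{k-1}{n}\bigr)$ appearing in (\ref{cyclic-1})--(\ref{cyclic-3}) is positive, so every left-hand side is a strictly increasing function of each of $\alpha,\beta,\gamma$. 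Replacing $\alpha,\beta,\gamma$ by the smaller common value $\tau$ gives $\tau(\tau-(k-1)/n)+\tau=1$, hence each of the six actual left-hand sides is strictly greater than $1$. Therefore $\alpha,\beta,\gamma$ form a $(k,n)$-cyclic triple, and Theorem \ref{main-theorem} completes the proof.

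There is essentially no obstacle here; the only point requiring any care is confirming $\tau_k(n)>(k-1)/n$ so that the monotonicity step is valid, and this is immediate from the hypothesis $n\geq 5k+2$.
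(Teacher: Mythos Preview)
Your proof is correct and is exactly the argument the paper has in mind: the paper states the corollary as an immediate consequence of Theorem \ref{main-theorem} without giving a separate proof, and your reduction via the identity $\tau(\tau-(k-1)/n)+\tau=1$ together with the monotonicity check is precisely how one verifies that $d(G)>\tau_k(n)$ forces the $(k,n)$-cyclic triple condition.
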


For  $k=cn$  taking the  limit $n\rightarrow\infty$  we get that  $\tau_k(n)\rightarrow \tau_c=\frac{1}{2} \left(c+\sqrt{(1-c)^2+4}-1\right)$, which is equal to $\tau$ for $c=0$ and 1 for $c=1$.

Given a tripartite graph $G$  we can construct a 3-uniform hypergraph $G'$, on the same vertex set, by  taking each triangle of $G$ to be an edge in $G'$. Now $k$ vertex-disjoint triangles in $G$ correspond to a matching of size $k$ in $G'$.    The maximum number of edges in a tripartite hypergraph without a matching of size $k$ is known, see Lemma \ref{3-graph-matching},  and together with  the lower  bound on the number of triangles in a tripartite graph, see Theorem \ref{triangle-density}, we get an upper bound for the number of edges required in $G$ to have $k$ vertex-disjoint triangles. In Figure \ref{fig:bound} we plot this bound for $k=cn$ together with $\tau_c$. As we can see the hypergraph bound differs strongly from the exact bound given by $\tau_c$. This  is a consequence of the fact  that, unlike for hyperedges, the  distribution of  triangles in a graph is highly correlated. 

\begin{figure}[h]
\begin{center}
  \includegraphics[width=0.48\textwidth]{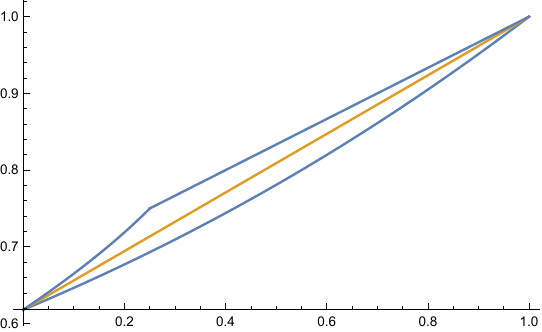}
  \end{center}
  \caption{\label{fig:bound} The hypergraph bound (top), a linear interpolation between the endpoints (middle), $\tau_c$ (bottom), as functions of  the number of triangles $c$ }
\end{figure}

Bondy, Shen, Thomass\'e and Thomassen \cite{BSTT06} also considered the problem of finding a copy of a complete graph $K_l$, with at most one vertex in each partite class, or   in a $r$-partite graph for general $l\leq r$. This line of investigation was continued for more general subgraphs by Nagy in \cite{MR2776822}, and Pfender \cite{MR2965288}, who considered triangles in multipartite graphs with more than three parts.  While the previous two papers worked with a common lower bound for all the bipartite densities  \cite{MR2942728} gave results for trees using independent densities for all bipartite subgraphs.    In \cite{MR3647823} an analogue of the  Erd{\"o}s-Stone-Simonovits theorem was proved for the case when the number of parts is large and the sought subgraph is color-critical.    More recently \cite{MR4831824}  considered many different variations on this problem, including cycles of length $l$ in $l$-partite graphs.   In our context  one may ask for  density conditions which guarantee the existence of $k$ copies of a subgraph, instead of single copy.  As a concrete example we ask:
\begin{prob}
	Let $t$ be fixed.   Assume that $G$ is a $t$-partite graph with $n$ vertices in each class and bipartite densities at least $\alpha_t(c)$. How large does $\alpha_t(c)$ have to be in order to guarantee the existence of $k=c n$ vertex disjoint copies of $K_t$ in $G$?
\end{prob}

{\ }\\

Conjecture \ref{conj1} claims that our theorem can be extended to include partial triangle-factors with $c n$ triangles for any constant $c<1$. It will not be possible to make a simple extension to include actual triangle-factors, i.e. $n=k$, since a density condition of the kind used in the theorem cannot exclude the existence of isolated vertices.   One can formulate a density condition for triangle-factors as well which, as we will now show,  becomes very restrictive.

Let $G$ be a tripartite graph with parts $V_1$, $V_2$ and $V_3$ such that $|V_1|=|V_2|=|V_3|= n$. We say that $G$ contains \emph{triangle-factor} if $G$ contains $n$ vertex-disjoint triangles. The following theorem shows that the extremal graph for triangle-factor case is different from the case in Theorem \ref{main-theorem}.
\begin{thm}\label{main-theorem2}
	Let $G$ be a tripartite graph with parts $V_1$, $V_2$ and $V_3$ such that $|V_1|=|V_2|=|V_3|= n>240$. Let $(n^2-n+\alpha'n^{\delta_a})$, $(n^2-n+\beta'n^{\delta_b})$ and $(n^2-n+\gamma'n^{\delta_c})$ be the number of edges of $G[V_1,V_2]$, $G[V_1,V_3]$ and $G[V_2,V_3]$, respectively. If 
	\begin{equation}\label{factor-edge-number}
		\left\{
		\begin{aligned}
			&\alpha'\beta' n^{(\delta_a+\delta_b-1)}+\gamma'n^{(\delta_c-1)}>1,\\
			&\gamma'\beta' n^{(\delta_c+\delta_b-1)}+\alpha'n^{(\delta_a-1)}>1,\\
			&\alpha'\gamma' n^{(\delta_a+\delta_c-1)}+\beta'n^{(\delta_b-1)}>1,
		\end{aligned}
		\right.
	\end{equation}
	then $G$ contains a triangle-factor. 
\end{thm}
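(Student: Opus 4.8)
\emph{A cleaner form of the hypothesis.}
Write $a,b,c$ for the numbers of non-edges of $G[V_2,V_3]$, $G[V_1,V_3]$, $G[V_1,V_2]$ respectively, so that $a=n-\gamma'n^{\delta_c}$, $b=n-\beta'n^{\delta_b}$, $c=n-\alpha'n^{\delta_a}$. Multiplying the three inequalities of (\ref{factor-edge-number}) by $n$ and substituting, the hypothesis becomes the compact system
\begin{equation*}
(n-b)(n-c)>a,\qquad (n-a)(n-c)>b,\qquad (n-a)(n-b)>c.
\end{equation*}
Each inequality forces its left-hand factors to be positive, so $a,b,c<n$; hence every $G[V_i,V_j]$ has a perfect matching, because a bipartite graph with parts of size $n$ and fewer than $n$ non-edges satisfies Hall's condition (a violating set $S$ is incident to at least $|S|(n-|S|+1)\ge n$ non-edges). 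The system is invariant under permuting $\{V_1,V_2,V_3\}$, so I may assume $a\ge b\ge c$; then also $(n-c)^2\ge(n-a)(n-c)>b$ and $(n-b)^2\ge(n-a)(n-b)>c$, and these two consequences will drive the hard case.

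\emph{Reduction to one auxiliary matching.}
Fix a perfect matching $M$ of $G[V_2,V_3]$ with induced bijection $\phi\colon V_2\to V_3$, and define the bipartite graph $H_\phi$ on $V_1\cup V_2$ by joining $v$ to $u$ iff $vu\in E(G[V_1,V_2])$ and $v\phi(u)\in E(G[V_1,V_3])$. A perfect matching of $H_\phi$ sending $v$ to $\sigma(v)$ yields the $n$ vertex-disjoint triangles $\{v,\sigma(v),\phi(\sigma(v))\}$, i.e.\ a triangle-factor, so it suffices to find one $\phi$ for which $H_\phi$ has a perfect matching. Each non-edge of $\overline G[V_1,V_2]$ removes one pair of $H_\phi$, and each non-edge $vw$ of $\overline G[V_1,V_3]$ removes the pair $\{v,\phi^{-1}(w)\}$, so $|E(\overline{H_\phi})|\le b+c$ for every $M$. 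Consequently, if $b+c<n$ then $H_\phi$ has a perfect matching for \emph{any} $M$ by the estimate above; this settles every configuration in which one bipartite pair is nearly complete, and all ``diffuse'' configurations.

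\emph{The hard regime $b+c\ge n$.}
Here $b\ge n/2$ and $a,b,c\le n-2$, and a careless $M$ may leave a vertex of $V_1$ isolated in $H_\phi$, so $M$ must be chosen with care. The controlling estimate is that a Hall violation of $H_\phi$ by a set $S\subseteq V_1$ forces $\phi$ to push at least $n-|S|+1-c$ vertices of $V_2$ into $Z_S$, the set of vertices of $V_3$ non-adjacent to every vertex of $S$, while $|Z_S|\le b/|S|$; combined with $(n-c)^2>b$ and $\sum_v(\deg_{\overline G[V_1,V_3]}(v)+\deg_{\overline G[V_1,V_2]}(v))=b+c<2n$, this pins the potentially dangerous sets $S$ into thin ranges of sizes near $1$ and near $n-c$ (in particular at most one dangerous singleton). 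The plan is to exhibit a $\phi$ avoiding all of them: first enclose the boundedly many vertices of each class that carry a constant fraction of their incident non-edges into vertex-disjoint triangles (found greedily, using that the rest of $G$ is extremely dense) and delete them, passing to a bounded-size-smaller instance whose bipartite complements are near-regular; then choose $M$ on this instance by one more matching argument mapping the few high-$\overline G[V_1,V_3]$-degree vertices of $V_3$ onto low-$\overline G[V_1,V_2]$-degree vertices of $V_2$, and finally verify Hall for $H_\phi$ from the resulting degree bounds.

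\emph{Main obstacle.}
The crux is exactly this last regime: one must check that the constantly many deletion triangles can be chosen pairwise disjoint even when $G$ has few genuinely universal vertices, that the reduced instance still satisfies the reformulated system, and that its near-regular leftover complement admits a $\phi$ for which $H_\phi$ meets Hall's condition. I expect the threshold $n>240$ to be precisely the amount of slack these estimates require.
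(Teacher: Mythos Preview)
Your reformulation in terms of the non-edge counts $a,b,c$ and the shortcut for $b+c<n$ are clean and correct, and your overall architecture --- absorb the few low-degree vertices into vertex-disjoint triangles, then find a perfect matching in an auxiliary bipartite graph on the high-minimum-degree remainder --- is exactly the paper's strategy. The paper happens to fix a matching in $G[V_1,V_2]$ and build the auxiliary graph between $M$ and $V_3$, while you fix $M$ in $G[V_2,V_3]$ and build $H_\phi$ on $V_1\cup V_2$; these are the same device up to relabelling.

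Where your proposal falls short is that the ``hard regime'' paragraph is a plan, not a proof. You assert that the dangerous Hall-violating sets are pinned into thin size ranges and that boundedly many absorbing triangles can be found disjointly, but you establish neither. In the paper this is the bulk of the work: one shows that each set $S_{ij}=\{v:d_{ij}(v)<n/2\}$ has size at most~$1$ and each $S'_{ij}=\{v:d_{ij}(v)\le 4n/5\}$ has size at most~$4$; one proves that every vertex lies in some triangle (a short density argument from the hypothesis restricted to that vertex); and then one proves, by a genuine case analysis, that the at most three vertices in $\bigcup S_{ij}$ can be covered by at most three pairwise disjoint triangles --- this is delicate precisely because a vertex of degree~$1$ in some $G[V_i,V_j]$ may occur and must be handled first. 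Only after these and the further $S'_{ij}$-covering triangles (at most $12$ in total) are removed does one obtain minimum degree exceeding $4n/5-12>3n/4$, and the inequality $4n/5-12>3n/4$ is exactly where $n>240$ enters. At that point the auxiliary bipartite graph has minimum degree greater than $n/2$ on both sides and Hall is immediate --- no careful choice of $M$ is needed. Your proposed refinement of aligning high-codegree vertices via $\phi$ is thus unnecessary; more importantly, your sketch omits the explicit degree thresholds, the size bounds on the exceptional sets, and the disjoint-covering argument that make the absorption step go through.
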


\noindent\textbf{Remark:}
The condition in (\ref{factor-edge-number}) is best possible. For instance, there is a tripartite graph $G$ with densities $\alpha=1-1/n+\alpha'n^{(\delta_a-2)},\beta=1-1/n+\beta'n^{(\delta_b-2)},\gamma=1-1/n+\gamma'n^{(\delta_c-2)}$ satisfying $\alpha'\beta' n^{(\delta_a+\delta_b-1)}+\gamma'n^{(\delta_c-1)}=1$ such that  $G$ does not have a triangle-factor.
To see this, consider a tripartite graph $G$ with parts $A$, $B$ and $C$. Denote the edge densities of $G[A,B]$, $G[A,C]$ and $G[B,C]$ by $\alpha$, $\beta$ and $\gamma$, respectively. Let $x\in A$, $B_1$ be a subset of $B$ such that $|B_1|=\alpha n^2-n(n-1)$ and $C_1$ be a subset of $C$ such that $|C_1|=\beta^2-n(n-1)$. Define the edge set of $G$ to be the set of all edges between $A\setminus\{x\}$ and $B$, $A\setminus\{x\}$ and $C$, $\{x\}$ and $B_1$, $\{x\}$ and $C_1$. 

{\ }\\

The rest of the paper is organized as follows. In Section 2, we will introduce notions and calculations needed for our proofs. Section 3 is devoted to prove our main result. 
In section 4,  we will prove the triangle-factor case.

%----------------------------------------------------------------------------------------------------------------------------------------------------------------------------------
\section{Preliminaries}

Let $$R=\{(\alpha,\beta,\gamma)\in[0,1]^3:\alpha\beta+\gamma>1,\alpha\gamma+\beta>1,\beta\gamma+\alpha>1\},$$ and  $$\Delta(\alpha,\beta,\gamma)=\alpha^2+\beta^2+\gamma^2-2\alpha\beta-2\alpha\gamma-2\beta\gamma+4\alpha\beta\gamma.$$
$R_1$ and $R_2$ is a partition of $R$ such that $$R_1=\{(\alpha,\beta,\gamma\in R:\Delta(\alpha,\beta,\gamma)\geq 0)\}$$ and $R_2=R\setminus R_1$.

Let $T_{min}(\alpha,\beta,\gamma)$  denote the minimum number of triangles in tripartite graph with the stated densities, divided by $n^3$.  From \cite{BJT10},  we have the following:
\begin{thm}[ Baber, Johnson and Talbot \cite{BJT10}]\label{triangle-density}
	If $(\alpha,\beta,\gamma)\in R_1$, then $T_{min}(\alpha,\beta,\gamma)=\alpha+\beta+\gamma-2$. If $(\alpha,\beta,\gamma)\in R_2$, then $T_{min}(\alpha,\beta,\gamma)\leq 2\sqrt{\alpha\beta(1-\gamma)}+2\gamma-2$.
\end{thm}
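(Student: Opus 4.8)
The plan is to treat the two claims of the theorem in turn. The first is an \emph{equality}, so it splits into a general lower bound $T_{min}(\alpha,\beta,\gamma)\ge\alpha+\beta+\gamma-2$ together with a construction attaining it on $R_1$; the second is only an \emph{upper} bound, so it needs just a construction. The lower bound will be a one-paragraph double count valid for every triple, and the bulk of the work — and the place where the split $R_1$ versus $R_2$ actually enters — lies in deciding exactly when the two constructions are feasible.

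\textbf{Lower bound.} For $c\in C$ write $N_A(c),N_B(c)$ for its neighbourhoods in $A,B$ and $d_A(c),d_B(c)$ for their sizes. The triangles through $c$ are exactly the edges of $G[A,B]$ lying inside $N_A(c)\times N_B(c)$, and any edge of $G[A,B]$ not counted has an endpoint in $A\setminus N_A(c)$ or in $B\setminus N_B(c)$, so the number of triangles through $c$ is at least $\alpha n^{2}-\bigl(n-d_A(c)\bigr)n-\bigl(n-d_B(c)\bigr)n$. Summing over $c\in C$ and using $\sum_{c\in C}d_A(c)=|E(G[A,C])|=\beta n^{2}$ and $\sum_{c\in C}d_B(c)=|E(G[B,C])|=\gamma n^{2}$ yields at least $(\alpha+\beta+\gamma-2)n^{3}$ triangles; hence $T_{min}(\alpha,\beta,\gamma)\ge\alpha+\beta+\gamma-2$ for every triple.

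\textbf{Matching construction on $R_1$.} To achieve equality I would force the estimate above to be tight for almost every $c$. Fix a parameter $t\in[1-\beta,\gamma]$; set $|C^{*}|=tn$, $|C^{**}|=(1-t)n$; choose $A_0\subseteq A$ with $|A_0|=\tfrac{1-\beta}{t}\,n$ and make it complete to $B$, and $B_0\subseteq B$ with $|B_0|=\tfrac{1-\gamma}{1-t}\,n$ and make it complete to $A$; make every vertex of $C^{*}$ complete to $B$ with all its non-neighbours in $A$ lying in $A_0$, taking all these $A$-degrees equal (so that their sum is $(\beta+t-1)n^{2}$), and symmetrically for $C^{**}$; finally place the remaining $A$–$B$ edges arbitrarily between $A\setminus A_0$ and $B\setminus B_0$. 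A direct computation shows this graph always has exactly $(\alpha+\beta+\gamma-2)n^{3}+O(n^{2})$ triangles, independently of the internal choices. It realises the densities $\alpha,\beta,\gamma$ precisely when the leftover bipartite graph between $A\setminus A_0$ and $B\setminus B_0$ can be given its required $\bigl(\alpha-1+(1-\tfrac{1-\beta}{t})(1-\tfrac{1-\gamma}{1-t})\bigr)n^{2}$ edges, i.e. when
\begin{equation*}
	-\alpha t^{2}+(\alpha-\beta+\gamma)t-(1-\beta)\gamma\ \ge\ 0
\end{equation*}
for some $t\in[1-\beta,\gamma]$. The left side is a downward parabola whose vertex $t^{*}=\tfrac{\alpha-\beta+\gamma}{2\alpha}$ lies in $(1-\beta,\gamma)$ by the inequalities defining $R$, and whose value there equals $\Delta(\alpha,\beta,\gamma)/(4\alpha)$. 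Hence a feasible $t$ exists exactly when $\Delta(\alpha,\beta,\gamma)\ge0$, that is, on $R_1$; together with the lower bound this proves the first assertion.

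\textbf{Construction on $R_2$ and the main obstacle.} On $R_2$ the layered construction is infeasible (the complete-to-$B$ and complete-to-$A$ sets alone would already force more than $\alpha n^{2}$ edges into $G[A,B]$), so one must redistribute the unavoidable triangles. I expect the right object to be a blocked construction: confine the non-edges of $G[B,C]$ to a product set $B_1\times C_2$ with $|B_1|\,|C_2|=(1-\gamma)n^{2}$, take $G[A,B]$ and $G[A,C]$ as unions of complete bipartite blocks aligned with $B=B_1\cup B_2$ and $C=C_1\cup C_2$ (so that any triangle using $b\in B_1$ is forced to use $c\in C_1$), and optimise over the one free ratio $\phi=|B_1|/n$; the triangle count then has the form $\phi X+\phi^{-1}Y+2\gamma-2$, and AM–GM, $\phi X+\phi^{-1}Y\ge 2\sqrt{XY}$, gives the bound $2\sqrt{\alpha\beta(1-\gamma)}+2\gamma-2$. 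The main difficulty is exactly this last construction — choosing the correct block pattern and checking the optimisation comes out to the stated value — together with the parabola/discriminant bookkeeping for the first part that identifies the feasibility region of the layered construction as \emph{precisely} $R_1$ (in particular the verification that $t^{*}\in(1-\beta,\gamma)$, which is where the cyclic inequalities $\alpha\beta+\gamma>1$ and $\alpha\gamma+\beta>1$ are used). The triangle counts themselves are routine.
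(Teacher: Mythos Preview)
The paper does not prove this theorem: it is quoted from \cite{BJT10} as a background result in the Preliminaries section, with no argument supplied. So there is no ``paper's own proof'' to compare your proposal against.

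That said, a brief assessment of your sketch on its own merits. Your lower bound is the standard double count and is correct. For the $R_1$ construction, your discriminant computation is right: the quadratic $-\alpha t^{2}+(\alpha-\beta+\gamma)t-(1-\beta)\gamma$ has discriminant exactly $\Delta(\alpha,\beta,\gamma)$, and your claim that the vertex $t^{*}=(\alpha-\beta+\gamma)/(2\alpha)$ lies in $(1-\beta,\gamma)$ does follow from the cyclic inequalities (e.g.\ $\alpha\beta+\gamma>1$ gives $2\alpha\beta-\alpha-\beta+\gamma>(1-\alpha)(1-\beta)\ge0$, hence $t^{*}>1-\beta$, and the other side is similar using $\alpha\gamma+\beta>1$). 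So the feasibility analysis identifying $R_1$ is sound, modulo the usual integrality rounding which only affects lower-order terms. Your $R_2$ paragraph, however, is only a plan: you have not specified the block structure precisely enough to read off what $X$ and $Y$ are, nor checked that the optimal $\phi$ actually lies in the admissible range, so the claimed AM--GM value $2\sqrt{\alpha\beta(1-\gamma)}$ is asserted rather than derived. If you want a self-contained proof you will need to fill that in; but for the purposes of the present paper the statement is simply cited.
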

\begin{conj}[ Baber, Johnson and Talbot \cite{BJT10}]\label{triangle-density-cnoj}
	 If $(\alpha,\beta,\gamma)\in R_2$, then $T_{min}(\alpha,\beta,\gamma) = 2\sqrt{\alpha\beta(1-\gamma)}+2\gamma-2$.
\end{conj}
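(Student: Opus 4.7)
The plan is to match the upper bound of Theorem~\ref{triangle-density} on $R_2$ by pushing the single-vertex link-counting argument that produces the $R_1$ bound $\alpha+\beta+\gamma-2$, and then supplementing it with flag-algebra tools to generate the square-root term. For each $v\in V_1$ the number of triangles through $v$ equals $e_{G[V_2,V_3]}(N_{V_2}(v),N_{V_3}(v))$, which by the standard bipartite counting inequality is at least $|N_{V_2}(v)|\cdot|N_{V_3}(v)|-(1-\gamma)n^2$ whenever this is positive. Writing $b_v=|N_{V_2}(v)|/n$ and $c_v=|N_{V_3}(v)|/n$ and summing over $v$ yields
\begin{equation*}
	T(G)/n^3 \;\geq\; \frac{1}{n}\sum_{v\in V_1}\max\bigl(0,\;b_vc_v-(1-\gamma)\bigr),
\end{equation*}
after which one must minimise the right hand side over the marginal constraints $\sum b_v=\alpha n$ and $\sum c_v=\beta n$.

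An exchange argument on extreme distributions should reduce the minimiser to a three-atom support contained in $\{(1,1),(1,1-\gamma),(1-\gamma,1)\}$, and the resulting linear program reproduces the $R_1$ bound $\alpha+\beta+\gamma-2$ exactly. Unfortunately, the same relaxation returns the same linear value throughout $R_2$, so this alone cannot produce the conjectured $2\sqrt{\alpha\beta(1-\gamma)}+2\gamma-2$: the defect $\Delta(\alpha,\beta,\gamma)<0$ that characterises $R_2$ is invisible to single-vertex link counting, and closing the gap requires exploiting correlations between distinct links.

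To capture these correlations I would pass to a codegree argument: for pairs $u,v\in V_1$ the joint triangle count through $\{u,v\}$ is controlled by $e$ of the common neighbourhoods, whose sizes satisfy bivariate density constraints (for instance $\sum_w|N_{V_2}(w)|^2\geq\alpha^2n^3$ by Cauchy--Schwarz, and similarly in $V_3$). Substituting $s^2=\alpha\beta(1-\gamma)$ to clear the radical reduces the conjecture to a polynomial inequality in $(\alpha,\beta,\gamma,s)$, which is a natural target for a tripartite flag-algebra / SDP search; a stability analysis identifying the extremal family as a continuous deformation into $R_2$ of the $\Delta=0$ boundary configuration from $R_1$ should then allow one to round the numerical certificate into a clean symbolic proof. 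The main obstacle is this codegree step: the bivariate relaxation is genuinely non-convex, and verifying that it is tight rather than merely stronger than the single-vertex relaxation seems to require either a clever symmetrisation or a computer-assisted sum-of-squares certificate, which is presumably why the conjecture has resisted proof since \cite{BJT10}.
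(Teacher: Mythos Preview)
The statement you are attempting to prove is a \emph{conjecture}, not a theorem: the paper records it as Conjecture~\ref{triangle-density-cnoj} of Baber, Johnson and Talbot and does not prove it. There is therefore no ``paper's own proof'' to compare your attempt against.

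Your proposal is candid about this situation. You correctly note that the single-vertex link-counting argument only recovers the linear bound $\alpha+\beta+\gamma-2$ (which is the content of the $R_1$ case of Theorem~\ref{triangle-density}) and cannot by itself see the square-root term on $R_2$. The remainder of your write-up is a sketch of possible attack lines (codegree correlations, flag algebras, SDP certificates) together with an explicit acknowledgement that the key step---tightness of the bivariate/codegree relaxation---is exactly the obstacle that has kept the problem open. In other words, you have not given a proof, and you say as much in your final paragraph.

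So the honest assessment is: there is no gap to name in the sense of a flawed argument, because there is no argument---only a plan whose central difficulty you have already identified. If the task was to supply a proof, the proposal does not do so; if the task was to explain why the conjecture is hard, it does that reasonably well.
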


For integers $n>r\geq 1$, let $[n]:=\{1,2,\cdots,n\}$ be the standard $n$ element set and $\binom{[n]}{r}=\{T\subseteq [n]:|T|=r\}$ be the collection of all its $r$-subsets. An $n$-vertex $r$-uniform hypergraph $H$ is a pair $H=(V,E)$, where $V:=[n]$ and $E(H)\subset \binom{[n]}{r}$. Let $\nu(H)$ be the \textit{matching number} of $H$, that is, the maximum number of pairwise vertex-disjoint members of $E(H)$. An $r$-uniform hypergraph $H$ is called \textit{$n$-balanced $r$-partite} if $V(H)$ is partitioned into sets $V_1,V_2,\ldots,V_r$ such that $|V_1|=|V_2|=\cdots=|V_r|=n$ and each hyperedge meets every $V_i$ in precisely one vertex. Aharoni and Howard \cite{AH17} showed the tight hyperedge density needed to guarantee the existence of a matching of size $k$ in an $n$-balanced $r$-partite $r$-uniform hypergraphs. (see Observation 1.9 in \cite{AH17})
\begin{lm}[Aharoni and Howard \cite{AH17}]\label{3-graph-matching}
	If $H$ is an $n$-balanced $r$-partite $r$-uniform hypergraph and $e(H)>(k-1)n^{r-1}$ then $\nu(H)\geq k$ and the bound is tight.
\end{lm}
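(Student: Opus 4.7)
The plan is to prove the lemma in two parts: a tight construction showing the bound is sharp, and the main implication that exceeding the threshold forces a matching of size $k$.

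For tightness, I would exhibit the following extremal hypergraph. Fix any $S \subseteq V_1$ with $|S| = k - 1$, and let $H_0$ consist of all $r$-tuples $(v_1, v_2, \ldots, v_r)$ with $v_1 \in S$ and $v_i \in V_i$ unrestricted for $i \geq 2$. Then $e(H_0) = (k-1)n^{r-1}$, and since every edge meets the $(k-1)$-element set $S$, no matching can exceed size $k - 1$, so $\nu(H_0) = k - 1$. This shows the bound $(k-1)n^{r-1}$ cannot be weakened.

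For the main implication I would proceed by induction on $r$. The base case $r = 2$ is König's theorem: if $\nu(H) \leq k - 1$, then $\tau(H) = \nu(H) \leq k - 1$ by K\"onig--Egerv\'ary duality, and since every vertex of an $n$-balanced bipartite graph has degree at most $n$, one gets $e(H) \leq (k-1) n$, contradicting the hypothesis. For the inductive step with $r \geq 3$, given $e(H) > (k-1)n^{r-1}$, the average degree in $V_1$ exceeds $(k-1)n^{r-2}$, so some $v \in V_1$ has link $L(v)$ — an $(r-1)$-partite $(r-1)$-uniform hypergraph on $V_2 \cup \cdots \cup V_r$ — with $e(L(v)) > (k-1)n^{r-2}$, yielding by induction a matching in $L(v)$, which in turn gives an edge of $H$ through $v$. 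To reach $k$ disjoint edges I would iterate: having built $j < k$ disjoint $H$-edges using $v_1,\ldots,v_j \in V_1$ and further vertices in $V_2,\ldots,V_r$, I would delete those vertices from $H$ and seek another vertex in $V_1$ whose remaining link has enough edges to invoke the inductive hypothesis.

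The main obstacle is preserving the tight edge threshold across the iteration. A naïve greedy argument loses a factor of $r$, because when removing the $j$th matching edge one may delete up to $r \cdot n^{r-1}$ edges from $H$ (the edges meeting any of its $r$ vertices), whereas the tight budget only allows $n^{r-1}$ removed edges per iteration. Closing this gap requires exploiting the $r$-partite structure carefully: one should charge each removed edge to a unique coordinate of the removed matching edge, so that the total edge loss over $j$ iterations is bounded by $j \cdot n^{r-1}$. I would implement this via a stronger inductive hypothesis — namely, that not only does a matching of size $k$ exist, but one can be chosen with a prescribed first vertex (of maximal degree), so that the ``first-coordinate'' bookkeeping aligns across iterations. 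With that strengthening, after $k - 1$ iterations the remaining hypergraph still has at least one edge, giving the $k$th disjoint edge and completing the proof.
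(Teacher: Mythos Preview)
The paper does not prove this lemma itself; it is quoted from Aharoni--Howard \cite{AH17} (Observation~1.9 there), so there is no in-paper argument to compare against and your proposal must stand on its own.

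Your tightness construction and the $r=2$ base case via K\"onig's theorem are correct. The genuine gap is precisely where you flag it, and your proposed repair does not close it. Strengthening the induction to ``a matching of size $k$ exists with a prescribed first vertex of maximal degree'' does not control the edges lost in parts $V_2,\ldots,V_r$: when you delete the $r$ vertices of one matching edge you may lose up to $r\,n^{r-1}$ hyperedges, and no charging scheme reduces this to $n^{r-1}$, because for each coordinate $i$ there can be $n^{r-1}$ distinct hyperedges meeting the deleted edge only in coordinate $i$. If instead you delete only the $V_1$-vertex $v$ (so the loss really is at most $n^{r-1}$) and apply induction on $k$ to $H-v$, you do obtain a matching $M'$ of size $k-1$ in $H-v$, but nothing guarantees an edge of $H$ through $v$ that avoids $V(M')$: the $(k-1)(r-1)$ vertices of $M'$ in $V_2\cup\cdots\cup V_r$ can block far more than the $(k-1)n^{r-2}$ edges you know $L(v)$ to have. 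So the iteration does not terminate with the claimed bound.

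The standard one-line proof sidesteps all of this with a random-diagonal averaging argument. Identify each $V_i$ with $[n]$, pick independent uniform permutations $\sigma_1,\ldots,\sigma_r$ of $[n]$, and set $D_j=(\sigma_1^{-1}(j),\ldots,\sigma_r^{-1}(j))$ for $j\in[n]$; the $D_j$ form a perfect matching of the complete $r$-partite hypergraph. For any fixed $e\in E(H)$ one has $\Pr[e\in\{D_1,\ldots,D_n\}]=\Pr[\sigma_1(e_1)=\cdots=\sigma_r(e_r)]=n^{1-r}$, so the expected number of $j$ with $D_j\in E(H)$ equals $e(H)\,n^{1-r}>k-1$, and some outcome yields $k$ pairwise disjoint edges of $H$. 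I would replace your inductive scheme with this argument.
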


For a graph $G$ and  $U\subseteq V(G)$, we use $G-U$ to denote the subgraph of $G$ induced by $V(G)\setminus U$, and we use $G[U]$ to denote the subgraph of $G$  induced by $U$. For a graph $G$ and  $E\subseteq E(G)$, we use $H-E$ to denote the graph obtained from $G$ by deleting $E$.

\begin{Def}
For a given graph $G$, let $uv$ be an edge in $G$ and $xyz$ be a triangle in $G$. We say that $uv$ \emph{sees} vertex $x$ of $xyz$ if $uvx$ is a triangle in $G$. The edge $uv$ sees $xyz$ if $uv$ sees at least one of the vertices $x$, $y$, or $z$. 
\end{Def}

For triangles $T_1$, $T_2$ in $G$, we say that $T_1$ \textit{intersects} $T_2$ if $V(T_1)\cap V(T_2)\neq \emptyset$. A set of triangles $\mathcal{T}$ is said to be \textit{intersecting} if for any $T_i, T_j\in \mathcal{T}$ we have $T_i$ intersects $T_j$.
\begin{lm}\label{intersecting-triangle}
	Let $n$ be an integer and $G$ be a tripartite graph with parts $V_1$, $V_2$ and $V_3$ such that $|V_i|\geq n>1$ for $i\in[3]$. Suppose that $G$ contains a triangle $T$ and there are no two vertex-disjoint triangles in $G$. If every vertex in $T$ seen by at most two vertex-disjoint edges disjoint from $V(T)$, then there is an edge set $E_0$ such that $E_0\cap E(T)=\emptyset$, $|E_0\cap E(G[A_i,A_{i+1}])|\leq 2$ for $i\in [3]$ and there is no triangle in $G-E_0-E(T)$. 
\end{lm}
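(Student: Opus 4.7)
The plan is to leverage the hypothesis that $G$ has no two vertex-disjoint triangles in order to reduce the problem to destroying only those triangles that meet $T=xyz$ in \emph{exactly one} vertex. Any triangle meeting $V(T)$ in two or three vertices must contain one of the edges $xy$, $xz$, $yz$, and is therefore killed once $E(T)$ is removed. So it suffices to produce $E_0\subseteq E(G)\setminus E(T)$ with $|E_0\cap E(G[V_i,V_{i+1}])|\leq 2$ that destroys every triangle containing exactly one vertex of $T$. Write $A_x$, $A_y$, $A_z$ for these three classes, indexed by the vertex of $T$ they contain, with $x\in V_1$, $y\in V_2$, $z\in V_3$.

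The structural heart of the argument is the observation that whenever two of $A_x, A_y, A_z$ are simultaneously non-empty, all triangles in those classes are forced to share a common ``outside'' vertex. For triangles $xuv\in A_x$ and $yu'v'\in A_y$ with $u\in V_2\setminus\{y\}$, $v\in V_3\setminus\{z\}$, $u'\in V_1\setminus\{x\}$, $v'\in V_3\setminus\{z\}$, the tripartite structure and the exclusions on $u,u',v,v'$ force the only possible common vertex of $\{x,u,v\}$ and $\{y,u',v'\}$ to be $v=v'$; since every pair of triangles in $G$ must share a vertex, all of $A_x\cup A_y$ must therefore contain a single $v^*\in V_3\setminus\{z\}$. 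Symmetric arguments produce a common $u^*\in V_2\setminus\{y\}$ for $A_x\cup A_z$ and a common $w^*\in V_1\setminus\{x\}$ for $A_y\cup A_z$.

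I will then split into cases according to the number of non-empty classes among $A_x,A_y,A_z$. If none are non-empty, set $E_0=\emptyset$. If exactly one, say $A_x$, I apply K\"onig's theorem to the bipartite auxiliary graph $H_x$ of edges in $G[V_2\setminus\{y\},V_3\setminus\{z\}]$ that see $x$: by hypothesis $\nu(H_x)\leq 2$, giving a vertex cover $S_x$ with $|S_x|\leq 2$, and $E_0=\{xu:u\in S_x\}$ kills every triangle of $A_x$ while placing at most two edges in each of $G[V_1,V_2]$ and $G[V_1,V_3]$ and none in $G[V_2,V_3]$. If exactly two classes are non-empty, say $A_x$ and $A_y$, the common $v^*$ lets me take $E_0=\{xv^*,yv^*\}$: one edge in $G[V_1,V_3]$, one in $G[V_2,V_3]$, and each destroys every triangle of its class. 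If all three classes are non-empty, combining the three common-vertex statements forces $A_x=\{xu^*v^*\}$, $A_y=\{yw^*v^*\}$, $A_z=\{zw^*u^*\}$, and taking $E_0=\{u^*v^*,w^*v^*,w^*u^*\}$ places exactly one edge in each bipartite part while killing all three triangles.

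In every case the chosen edges involve at least one vertex outside $\{x,y,z\}$, so $E_0\cap E(T)=\emptyset$, and the per-part bound $|E_0\cap E(G[V_i,V_{i+1}])|\leq 2$ is immediate from the construction. The main step that requires care is the structural claim forcing a common outside vertex whenever two of $A_x,A_y,A_z$ are non-empty; after it is established, the case analysis and explicit construction of $E_0$ are short, and one checks case by case that $E_0\cup E(T)$ contains an edge of every triangle of $G$.
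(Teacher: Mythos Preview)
Your proof is correct, and it takes a genuinely different route from the paper's argument. The paper orders the vertices of $T$ so that $v_1$ has the largest matching number among edges seeing it, then splits on whether that matching number is $2$ or $1$: in the first case it shows directly that $v_2,v_3$ are seen by no outside edge (so only $A_x$ is nonempty) and applies K\"onig to the edges seeing $v_1$; in the second case it applies K\"onig separately at each $v_i$ (each now with matching number $\leq 1$) and takes the union of the resulting at most three edges. Your decomposition is instead by the number of nonempty classes among $A_x,A_y,A_z$, and the engine is the \emph{common outside vertex} observation: whenever two classes are nonempty, the tripartite structure plus the no-two-disjoint-triangles hypothesis forces every triangle in their union through a single vertex of the remaining part. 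This structural lemma lets you dispense with K\"onig entirely in the $2$- and $3$-nonempty cases, producing a very small and explicit $E_0$ (two or three named edges). The paper's approach is shorter to state and leans uniformly on K\"onig; yours isolates a cleaner structural picture and yields tighter edge sets in the multi-class cases, at the cost of proving the common-vertex claim. Both are valid, and the underlying disjointness idea in the paper's Case~1 is essentially the contrapositive of your common-vertex claim, so the two arguments are cousins rather than unrelated.
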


\pf
Let $T=v_1v_2v_3$, where $v_i\in V_i$. Since there are no two vertex-disjoint triangles in $G$, every triangle in $G$ intersects $T$. If there is no vertex in $T$ seen by edges disjoint from $V(T)$, then there is no triangle in $G-E(T)$ and the proof is done. So without loss of generality we may assume that there exists at least an edge disjoint from $V(T)$ seeing $v_1$ and the number of edges seeing $v_i$ is no more than the number of edges seeing $v_1$ for $i=2,3$. Define $N_i(v_1)=\{x\in V_i\setminus {v_i}:xv_1\in E(G)\}$ for $i=2,3$. Let $M$ be a maximum matching in $G[N_2(v_1),N_3(v_1)]$. Note that $1\leq |M|\leq 2$. By K\"onig's Theorem, there is a vertex cover $V_0$ of size $|M|$ in  $G[N_2(v_1),N_3(v_1)]$. There are two cases.

\medskip
\noindent\textbf{Case 1.} $|M|=2$.

In this case, there is no edge disjoint from $V(T)$ sees $v_2, v_3$. Indeed, suppose that $v_2$ is seen by $x_1x_3$, then there exists an edge $y_2y_3\in E(G[N_2(v_1),N_3(v_1)])$ disjoint from $x_1x_3$ such that $x_1v_2x_3$ and $v_1y_2y_3$ are two vertex-disjoint triangles, a contradiction. Let $E_0=E(G[\{v_1\}\cup V_0])$. One can see that $|E_0\cap E(G[A_i,A_{i+1}])|\leq 2$ for $i\in [3]$ and there is no triangle in $G-E_0-E(T)$.

\medskip
\noindent\textbf{Case 2.} $|M|= 1$.

Define $N_i(v_2)=\{x\in V_i\setminus {v_i}:xv_2\in E(G)\}$ for $i=1,3$ and $N_i(v_3)=\{x\in V_i\setminus {v_i}:xv_3\in E(G)\}$ for $i=1,2$. Since $\nu(G[N_1(v_2),N_3(v_2)]),\nu(G[N_1(v_3),N_2(v_3)])\leq |M|=1$, there exist a vertex cover $V_0'$ of size at most one in $G[N_1(v_2),N_3(v_2)]$ and a vertex cover $V_0''$ of size at most one in $G[N_1(v_3),N_2(v_3)]$. Let $E_0=E(G[\{v_1\},V_0])\cup E(G[\{v_2\},V_0'])\cup E(G[\{v_3\},V_0''])$. Then $|E_0\cap E(G[A_i,A_{i+1}])|\leq 2$ for $i\in [3]$ and there is no triangle in $G-E_0-E(T)$. The proof is done.
\qed

If the densities $\alpha,\beta,\gamma$ are ordered by size, then the condition in the definition of $(k,n)$-cyclic triple can be simplified.
\begin{pro}\label{cyclic-condition}
	Let $1\geq\alpha, \beta\geq \gamma\geq0$ be three reals. If (\ref{cyclic-1}) holds, then (\ref{cyclic-2}) and (\ref{cyclic-3}) hold. 
\end{pro}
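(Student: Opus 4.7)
The plan is to exploit the ordering hypothesis $\alpha,\beta\geq\gamma$ and, after a WLOG assumption $\alpha\geq\beta$, to reduce each of the three pairs (\ref{cyclic-1}), (\ref{cyclic-2}), (\ref{cyclic-3}) to a single binding inequality that can be compared directly. Setting $t:=(k-1)/n\geq 0$ and expanding, each pair has a common bilinear product and constant, differing only in the coefficient of $-t$. Explicitly, (\ref{cyclic-1}) reads $\alpha\beta-\beta t+\gamma>1$ together with $\alpha\beta-\alpha t+\gamma>1$; the condition (\ref{cyclic-2}) reads $\alpha\gamma-\gamma t+\beta>1$ together with $\alpha\gamma-\alpha t+\beta>1$; and (\ref{cyclic-3}) reads $\beta\gamma-\gamma t+\alpha>1$ together with $\beta\gamma-\beta t+\alpha>1$.

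Under $\alpha\geq\beta$, within each pair the inequality with the larger coefficient of $-t$ is the harder one to satisfy, and once established it immediately implies its partner. So (\ref{cyclic-1}) effectively reduces to the single hypothesis $\alpha\beta-\alpha t+\gamma>1$, and to obtain (\ref{cyclic-2}) and (\ref{cyclic-3}) it suffices to prove $\alpha\gamma-\alpha t+\beta>1$ (using $\alpha\geq\gamma$ to get the other one in the pair) and $\beta\gamma-\beta t+\alpha>1$ (using $\beta\geq\gamma$ similarly), respectively.

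The two required bindings then follow by a direct subtraction from the hypothesis. For (\ref{cyclic-2}), I would compute $(\alpha\gamma-\alpha t+\beta)-(\alpha\beta-\alpha t+\gamma)=(\beta-\gamma)(1-\alpha)\geq 0$, using $\beta\geq\gamma$ and $\alpha\leq 1$. For (\ref{cyclic-3}), the difference is $(\beta\gamma-\beta t+\alpha)-(\alpha\beta-\alpha t+\gamma)=(\alpha-\gamma)(1-\beta)+t(\alpha-\beta)\geq 0$, using $\alpha\geq\gamma$, $\beta\leq 1$, $\alpha\geq\beta$, and $t\geq 0$. In both cases a strict inequality on the right pushes through because the added term is nonnegative. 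There is no serious obstacle here; the only point that requires care is identifying, for each pair, which of $\alpha,\beta,\gamma$ should multiply $t$ in the binding inequality under the assumed ordering, and then keeping track of the signs in the two elementary factorizations.
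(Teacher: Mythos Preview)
Your proof is correct and follows essentially the same approach as the paper: subtract an inequality from (\ref{cyclic-1}) and factor the difference as a product of nonnegative terms. The paper does not make the WLOG assumption $\alpha\geq\beta$; instead it verifies both inequalities of (\ref{cyclic-2}) directly (comparing $\gamma(\alpha-t)+\beta$ with $\beta(\alpha-t)+\gamma$ to get $(\beta-\gamma)(1-\alpha+t)\geq 0$, and $\alpha(\gamma-t)+\beta$ with $\alpha(\beta-t)+\gamma$ to get $(\beta-\gamma)(1-\alpha)\geq 0$) and then says the proof of (\ref{cyclic-3}) is similar by the $\alpha\leftrightarrow\beta$ symmetry. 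Your reduction to a single binding inequality per pair via the WLOG is a clean organizational choice but not a substantively different argument.
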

\pf
Since (\ref{cyclic-1}) holds, we have 
\begin{equation*}
	\begin{split}
		\gamma\left(\alpha-\frac{k-1}{n}\right)+\beta-1
		> \gamma\left(\alpha-\frac{k-1}{n}\right)+\beta-\left(\beta\left(\alpha-\frac{k-1}{n}\right)+\gamma\right)
		=(\beta-\gamma)\left(1+\frac{k-1}{n}-\alpha\right)
		\geq 0
	\end{split}
\end{equation*}
and
\begin{equation*}
	\begin{split}
		\alpha\left(\gamma-\frac{k-1}{n}\right)+\beta-1
		> \alpha\left(\gamma-\frac{k-1}{n}\right)+\beta-\left(\alpha\left(\beta-\frac{k-1}{n}\right)+\gamma\right)
		=(\beta-\gamma)\left(1-\alpha\right)
		\geq 0.
	\end{split}
\end{equation*}
Thus (\ref{cyclic-2}) holds. The proof for inequalities in (\ref{cyclic-3}) is similar.
\qed

Next we give the another form of inequalities \eqref{cyclic-1}, \eqref{cyclic-2} and \eqref{cyclic-3}, which will be used in the proof of Lemma \ref{delete2}.
\begin{equation}\label{cyclic-1'}
	\left\{
	\begin{aligned}
		&\beta\frac{\alpha n-(k-1)}{n-(k-1)}+\frac{\gamma n-(k-1)}{n-(k-1)}>1,\\
		&\alpha\frac{\beta n-(k-1)}{n-(k-1)}+\frac{\gamma n-(k-1)}{n-(k-1)}>1
	\end{aligned}
	\right.
\end{equation}

\begin{equation}\label{cyclic-2'}
	\left\{
	\begin{aligned}
		&\gamma\frac{\alpha n-(k-1)}{ n-(k-1)}+\frac{\beta n-(k-1)}{ n-(k-1)}>1,\\
		&\alpha\frac{\gamma n-(k-1)}{n-(k-1)}+\frac{\beta n-(k-1)}{ n-(k-1)}>1
	\end{aligned}
	\right.
\end{equation}

\begin{equation}\label{cyclic-3'}
	\left\{
	\begin{aligned}
		&\gamma\frac{\beta n-(k-1)}{ n-(k-1)}+\frac{\alpha n-(k-1)}{n-(k-1)}>1,\\
		&\beta\frac{\gamma n-(k-1)}{ n-(k-1)}+\frac{\alpha n-(k-1)}{n-(k-1)}>1
	\end{aligned}
	\right.
\end{equation}
\begin{pro}\label{equivalence}
$\alpha,\beta,\gamma$ is a $(k,n)$-cyclic triple if and only if inequalities \eqref{cyclic-1'}, \eqref{cyclic-2'} and \eqref{cyclic-3'} hold.
\end{pro}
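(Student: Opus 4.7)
The plan is to show the equivalence by proving each primed inequality is equivalent, via a single reversible algebraic step, to its corresponding unprimed counterpart. Since the pairing is term-by-term and the six pairs are completely analogous (differing only by cyclic permutation of $\alpha,\beta,\gamma$), it suffices to carry out the computation for one pair and then invoke symmetry.

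First I would record the standing hypothesis $n>k\geq 1$, which guarantees $n-(k-1)\geq 2>0$; consequently multiplying both sides of any inequality in \eqref{cyclic-1'}, \eqref{cyclic-2'}, \eqref{cyclic-3'} by $n-(k-1)$ preserves its direction. Then, taking the first inequality of \eqref{cyclic-1'} as a representative, I would clear the common denominator to obtain $\beta(\alpha n-(k-1))+\gamma n-(k-1)>n-(k-1)$; canceling the $-(k-1)$ summands on both sides and then dividing by $n$ yields $\alpha\beta-\beta(k-1)/n+\gamma>1$, which regrouped is exactly $\beta(\alpha-\tfrac{k-1}{n})+\gamma>1$, the first inequality of \eqref{cyclic-1}. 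Each step is reversible, so the two inequalities are equivalent. The remaining five pairs are handled by the identical computation after the appropriate relabelling of variables.

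The main obstacle is essentially nonexistent: this proposition is a bookkeeping statement whose purpose is to record a convenient algebraic reformulation of the $(k,n)$-cyclic triple condition for later use in the proof of Lemma \ref{delete2} (where densities of the form $\tfrac{\alpha n-(k-1)}{n-(k-1)}$ arise naturally after removing $k-1$ vertices from each part). The only point requiring any care is the sign of the multiplier $n-(k-1)$, which is handled at the outset by the standing hypothesis.
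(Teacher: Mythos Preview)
Your proposal is correct and follows essentially the same route as the paper: both reduce each primed inequality to its unprimed counterpart via the common intermediate form $\lambda(\eta n-(k-1))+\mu n-(k-1)>n-(k-1)$, then appeal to symmetry for the remaining pairs. Your version is in fact slightly more careful, since you explicitly justify the sign of the multiplier $n-(k-1)$ before clearing denominators.
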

\begin{proof}
	Let $\{\lambda,\eta,\mu\}:=\{\alpha,\beta,\gamma\}$. Then we only need to show $\lambda\left(\eta-\frac{k-1}{n}\right)+\mu>1$ holds if and only if $\lambda\frac{\eta n-(k-1)}{n-(k-1)}+\frac{\mu n-(k-1)}{n-(k-1)}>1$ holds. Notice that $$\lambda\frac{\eta n-(k-1)}{n-(k-1)}+\frac{\mu n-(k-1)}{n-(k-1)}>1$$ holds if and only if
	\begin{equation*}\label{equivalence-inequality}
		\lambda\left(\eta n-(k-1)\right)+\mu n-(k-1)>n-(k-1)
	\end{equation*} 
	holds and $\lambda\left(\eta-\frac{k-1}{n}\right)+\mu>1$ holds if and only if inequality \eqref{equivalence-inequality} holds. The proof is done.
\end{proof}

%----------------------------------------------------------------------------------------------------------------------------------------------------------------------------------
\section{Proof of Theorem \ref{main-theorem}}

We first prove Lemmas \ref{delete1} and \ref{delete2} to deal with vertices with large degree in $G$. 

\begin{lm}\label{delete1}
	Let $n,k$ be integers and $G$ be a tripartite graph with parts $U$, $V$ and $W$ such that  $|U|,|V|,|W|\geq n\geq k\geq 1$. Denote the edge densities of $G[U,V]$, $G[U,W]$ and $G[V,W]$ by $\lambda$, $\eta$ and $\mu$, respectively. Let $S$ be a subset of $V\cup W$  and $G'=G-S$ such that $|S|\leq k-1$. If
	\begin{equation*}
		\eta\left(\lambda-\frac{k-1}{|V|}\right)+\mu> 1 \ \text{and} \  \lambda\left(\eta-\frac{k-1}{|W|}\right)+\mu> 1,
	\end{equation*}
	then 
	\begin{equation}\label{delete1-1}
		\eta'\left(\lambda'-\frac{k-1-|S|}{|V\setminus T|}\right)+\mu'> 1 
	\end{equation}
	and
	\begin{equation}\label{delete1-2}
		\lambda'\left(\eta'-\frac{k-1-|S|}{|W\setminus T|}\right)+\mu'> 1 ,
	\end{equation}
	where $\lambda'$, $\eta'$ and $\mu'$ denote the edge densities of $G'[U,V\setminus T]$, $G'[U,W\setminus T]$ and $G'[V\setminus T,W\setminus T]$, respectively.
\end{lm}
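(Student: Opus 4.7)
The plan is to bound the post-deletion densities of $G'$ from below, substitute these bounds into the target inequalities, and reduce each to an algebraic comparison that is verified directly from the two hypotheses. I read the $T$ appearing in the conclusion as $S$, so that $V\setminus T = V\setminus S$ and $W\setminus T = W\setminus S$.

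Set $s_V=|S\cap V|$, $s_W=|S\cap W|$, $s=|S|=s_V+s_W$, $|V'|=|V|-s_V$, $|W'|=|W|-s_W$. Removing the vertices of $S$ destroys at most $s_V|U|$ edges of $G[U,V]$, at most $s_W|U|$ edges of $G[U,W]$, and at most $s_V|W|+s_W|V|$ edges of $G[V,W]$, giving
\begin{align*}
\lambda'|V'|&\geq \lambda|V|-s_V,\qquad \eta'|W'|\geq \eta|W|-s_W,\\
\mu'|V'||W'|&\geq \mu|V||W|-s_V|W|-s_W|V|.
\end{align*}
Each hypothesis combined with $\mu\leq 1$ forces $\lambda|V|>k-1\geq s_V$ and $\eta|W|>k-1\geq s_W$, so every right-hand side above is strictly positive.

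To prove \eqref{delete1-1} I multiply through by $|V'||W'|$, getting $\eta'|W'|\bigl[\lambda'|V'|-(k-1-s)\bigr]+\mu'|V'||W'|>|V'||W'|$. Since $\lambda'|V'|-(k-1-s)\geq \lambda|V|-(k-1)+s_W\geq 0$ and $\eta'|W'|\geq 0$, the product $\eta'|W'|[\lambda'|V'|-(k-1-s)]$ is at least $(\eta|W|-s_W)(\lambda|V|-(k-1)+s_W)$. After substituting and routine expansion, the task reduces to showing
\[
|V||W|(\lambda\eta+\mu-1)-\eta|W|(k-1)+s_W\bigl[\eta|W|-\lambda|V|+(k-1)-s\bigr]>0.
\]
The first hypothesis rearranges to $|V|(\lambda\eta+\mu-1)>\eta(k-1)$, showing the first two summands are already strictly positive. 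If the bracket is nonnegative, we are done. Otherwise $\lambda|V|>\eta|W|+(k-1)-s$, and I bring in the second hypothesis in the form $|V||W|(\lambda\eta+\mu-1)>\lambda(k-1)|V|$. Regrouping then yields
\[
\text{LHS}\;>\;(k-1-s_W)(\lambda|V|-\eta|W|)+s_W(k-1-s)\;\geq\;0,
\]
where both summands are nonnegative because $s_W\leq s\leq k-1$ and $\lambda|V|\geq\eta|W|$ in this sub-case; the strict inequality is preserved from the strict use of hypothesis two.

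Inequality \eqref{delete1-2} follows by the symmetric argument, with the roles of $V\leftrightarrow W$, $\lambda\leftrightarrow\eta$, $s_V\leftrightarrow s_W$, and the two hypotheses interchanged. The delicate point is the final case split: when the ``correction'' bracket is negative one must bring in the \emph{other} hypothesis together with $s_W\leq k-1$ to absorb it, so both hypotheses are genuinely needed—neither alone suffices.
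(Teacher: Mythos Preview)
Your proof is correct and follows essentially the same line as the paper's: lower-bound the post-deletion densities, clear denominators, and finish by a case split on the sign of a quantity equivalent to $\lambda|V|-\eta|W|$, using the first hypothesis when the bracket is nonnegative and the second when it is negative. The paper removes $S\cap V$ and $S\cap W$ in two stages (giving a marginally sharper bound on $\mu'$) and splits directly on the sign of $p=(k-1)(\lambda|V|-\eta|W|)/(|V||W|)$, but the substance of the argument is the same.
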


\pf
The proof of inequalities (\ref{delete1-1}) and (\ref{delete1-2}) are similar, so we only give the proof of inequality (\ref{delete1-1}) here. 
Let $|S\cap V|=x$, $|S\cap W|=y$ and $|S|=t$. Thus
\begin{equation*}
	x+y=t.
\end{equation*} 
Let
\begin{equation*}
	p:=\eta\left(\lambda-\frac{k-1}{|V|}\right)+\mu-\left(\lambda\left(\eta-\frac{k-1}{|W|}\right)+\mu\right)=\frac{(k-1)(\lambda|V|-\eta|W|)}{|V||W|}
\end{equation*}
be the difference between $\eta\left(\lambda-\frac{k-1}{|V|}\right)+\mu$ and $\lambda\left(\eta-\frac{k-1}{|W|}\right)+\mu$.

Define $G_1=G-(S\cap V)$. For each vertex $v\in V$, the degree of $v$ is at most $|U|$ in $G[U,V]$ and at most $|W|$ in $G[V,W]$. Therefore, $e(G_1[U,V])\geq \lambda|U||V|-x|U|$ and $e(G_1[V,W])\geq \mu|V||W|-x|W|$. Denote the edge densities of $G_1[U,V]$, $G_1[U,W]$ and $G_1[V,W]$ by $\lambda_1$, $\eta_1$ and $\mu_1$, respectively. Thus
\begin{equation}\label{G_1-density}
	\lambda\geq\lambda_1\geq \frac{\lambda |U||V|-x|U|}{|U|(|V|-x)}=\frac{\lambda |V|-x}{|V|-x}, \ \mu\geq\mu_1\geq \frac{\mu |V||W|-x|W|}{|W|(|V|-x)}=\frac{\mu |V|-x}{|V|-x}, \ \eta_1=\eta.
\end{equation}

Note that $G'=G_1-(S\cap W)$, then for each vertex $w\in W$, the degree of $w$ is at most $|U|$ in $G_1[U,W]$ and at most $|V|-x$ in $G_1[V,W]$. Therefore, $e(G'[U,W])\geq \eta|U||W|-y|U|$ and $e(G'[V,W])\geq e(G_1[V,W])-y(|V|-x)=\mu_1(|V|-x)|W|-y(|V|-x)$. Thus
\begin{equation}\label{G'-density}
	\lambda'=\lambda_1, \ \mu\geq\mu'\geq \frac{\mu_1 (|V|-x)|W|-y(|V|-x)}{(|W|-y)(|V|-x)}=\frac{\mu_1 |W|-y}{|W|-y}, \ \eta\geq \eta'\geq\frac{\eta_1 |U||W|-y|U|}{(|W|-y)|U|}=\frac{\eta_1 |W|-y}{|W|-y}.
\end{equation}

By inequalities \eqref{G_1-density} and \eqref{G'-density}, in order to prove $$\eta'\left(\lambda'-\frac{k-1-t}{|V\setminus T|}\right)+\mu'> 1,$$ it suffices to prove 
$$\frac{\eta|W|-y}{|W|-y}\left(\frac{\lambda|V|-x}{|V|-x}-\frac{k-1-t}{|V|-x}\right)+\frac{\frac{(\mu |V|-x)|W|}{|V|-x}-y}{|W|-y}> 1.$$
Notice that 
\begin{equation*}
	\begin{split}
		&\frac{\eta|W|-y}{|W|-y}\left(\frac{\lambda|V|-x}{|V|-x}-\frac{k-1-t}{|V|-x}\right)+\frac{\frac{(\mu |V|-x)|W|}{|V|-x}-y}{|W|-y}-1\\
		=&\frac{(\eta|W|-y)(\lambda|V|-x-(k-1-t))}{(|W|-y)(|V|-x)}+\frac{(\mu |V|-x)|W|-y(|V|-x)}{(|W|-y)(|V|-x)}-\frac{(|W|-y)(|V|-x)}{(|W|-y)(|V|-x)}\\
		=&\frac{(\eta|W|-y)(\lambda|V|-(k-1-y))+\mu|V||W|-|V||W|}{(|W|-y)(|V|-x)}\\
		=&\frac{|V||W|((\eta-\frac{y}{|W|})(\lambda-\frac{k-1-y}{|V|})+\mu-1)}{(|W|-y)(|V|-x)}\\
		=&\frac{|V||W|(\eta(\lambda-\frac{k-1}{|V|})+\frac{\eta y}{|V|}-\frac{y}{|W|}(\lambda-\frac{k-1-y}{|V|})+\mu-1)}{(|W|-y)(|V|-x)}\\
		\geq &\frac{|V||W|(\eta(\lambda-\frac{k-1}{|V|})+\frac{y(\eta|W|-\lambda|V|)}{|V||W|}+\mu-1)}{(|W|-y)(|V|-x)}.
	\end{split}
\end{equation*}
So we only need to prove that
\begin{equation*}
	\eta\left(\lambda-\frac{k-1}{|V|}\right)+\frac{y(\eta|W|-\lambda|V|)}{|V||W|}+\mu-1>0.
\end{equation*}

There are two cases.

\medskip
\noindent\textbf{Case 1.} $p\leq 0$.

Since $p\leq 0$, $\eta|W|-\lambda|V|\geq0$. Thus $\frac{y(\eta|W|-\lambda|V|)}{|V||W|}\geq 0$. Combining with $$\eta\left(\lambda-\frac{k-1}{|V|}\right)+\mu>1,$$ the proof is done in this case.

\medskip
\noindent\textbf{Case 2.} $p>0$.

Since $$\lambda\left(\eta-\frac{k-1}{|W|}\right)+\mu> 1,$$ it suffices to prove $$\eta\left(\lambda-\frac{k-1}{|V|}\right)+\frac{y(\eta|W|-\lambda|V|)}{|V||W|}+\mu-\left(\lambda\left(\eta-\frac{k-1}{|W|}\right)+\mu\right)>0.$$
Indeed,
\begin{equation*}
	\begin{split}
		&\eta\left(\lambda-\frac{k-1}{|V|}\right)+\frac{y(\eta|W|-\lambda|V|)}{|V||W|}+\mu-\left(\lambda\left(\eta-\frac{k-1}{|W|}\right)+\mu\right)\\
		=&p-\frac{y(\lambda|V|-\eta|W|)}{|V||W|}\\
		=&\frac{(k-1)(\lambda|V|-\eta|W|)}{|V||W|}-\frac{y(\lambda|V|-\eta|W|)}{|V||W|}\\
		=&\frac{(k-1-y)(\lambda|V|-\eta|W|)}{|V||W|}\geq 0.
	\end{split}
\end{equation*}
and the proof is done in this case.
\qed

\begin{lm}\label{delete2}
	Suppose that $n, k$ are integers and $G$ is a tripartite graph with parts $A$, $B$ and $C$ such that $|A|\geq|B|,|C|\geq  n\geq k\geq 1$. Let $\alpha$, $\beta$ and $\gamma$ be the edge densities of $G[A,B]$, $G[A,C]$ and $G[B,C]$, respectively, such that $\alpha, \beta\geq \gamma$. Let $S$ be a subset of $A$  and $G'=G-S$ such that $|S|\leq k$. If 
	\begin{equation*}
			\beta\left(\alpha-\frac{k}{|B|}\right)+\gamma>1 \ \text{and} \ 
			\alpha\left(\beta-\frac{k}{|C|}\right)+\gamma>1
	\end{equation*}
	hold, then 
	\begin{equation*}
	         \beta'\left(\alpha'-\frac{k-|S|}{|B|}\right)+\gamma'>1 \ \text{and} \ 
			\alpha'\left(\beta'-\frac{k-|S|}{|C|}\right)+\gamma'>1,
	\end{equation*}
	 where $\alpha'$, $\beta'$ and $\gamma'$ is the edge densities of $G'[A\setminus T,B]$, $G'[A\setminus T,C]$ and $G'[B,C]$, respectively.
	
\end{lm}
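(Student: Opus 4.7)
The plan is to follow the blueprint of the proof of Lemma \ref{delete1}. First, since $S\subseteq A$, the bipartite graph $G[B,C]$ is untouched, so $\gamma'=\gamma$. Each vertex of $S$ has at most $|B|$ neighbours in $B$ and at most $|C|$ in $C$, yielding the lower bounds
$$\alpha'\ge\alpha'_0:=\frac{\alpha|A|-|S|}{|A|-|S|},\qquad \beta'\ge\beta'_0:=\frac{\beta|A|-|S|}{|A|-|S|}.$$
The two claimed inequalities are interchanged by the symmetry $(\alpha,|B|)\leftrightarrow(\beta,|C|)$, which also interchanges the two hypotheses, so it suffices to prove only the first, $\beta'(\alpha'-(k-|S|)/|B|)+\gamma>1$.

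Next I reduce to a single deterministic inequality in $\alpha,\beta,\gamma$. Writing $\alpha'=\alpha'_0+\delta_\alpha$ and $\beta'=\beta'_0+\delta_\beta$ with $\delta_\alpha,\delta_\beta\ge 0$, a direct expansion gives
$$\beta'\Bigl(\alpha'-\tfrac{k-|S|}{|B|}\Bigr)=\beta'_0\Bigl(\alpha'_0-\tfrac{k-|S|}{|B|}\Bigr)+\beta'_0\,\delta_\alpha+\delta_\beta\Bigl(\alpha'-\tfrac{k-|S|}{|B|}\Bigr).$$
Both correction summands are non-negative: $\beta'_0>0$ follows because the second hypothesis forces $\beta>k/|C|\ge |S|/|A|$ (using $|C|\le|A|$ and $|S|\le k$), while $\alpha'\ge\alpha'_0\ge(k-|S|)/|B|$ follows from the first hypothesis forcing $\alpha>k/|B|$, together with $|A|\ge|B|$. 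Hence it is enough to prove the reduced inequality
$$\frac{\beta|A|-|S|}{|A|-|S|}\left(\frac{\alpha|A|-|S|}{|A|-|S|}-\frac{k-|S|}{|B|}\right)+\gamma>1.$$

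Finally, in direct analogy with the core computation of Lemma \ref{delete1}, I introduce
$$p:=\beta\Bigl(\alpha-\tfrac{k}{|B|}\Bigr)+\gamma-\Bigl(\alpha\Bigl(\beta-\tfrac{k}{|C|}\Bigr)+\gamma\Bigr)=\frac{k(\alpha|B|-\beta|C|)}{|B|\,|C|},$$
measuring how much tighter the first hypothesis is than the second, and split on the sign of $p$. If $p\le 0$ (so $\alpha|B|\le\beta|C|$) the first hypothesis is the weaker of the two, and after expanding the reduced inequality the $|S|$-correction terms take the form of non-negative multiples of $\beta|C|-\alpha|B|$, so the first hypothesis alone suffices. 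If $p>0$, I instead invoke the second hypothesis and show, exactly as in Case 2 of Lemma \ref{delete1}, that the $|S|$-dependent discrepancy $|S|(\alpha|B|-\beta|C|)/(|B|\,|C|)$ is bounded by $p$, using $|S|\le k$. The principal obstacle is the algebraic bookkeeping in this last step: expanding $(\beta|A|-|S|)(\alpha|A|-|S|)|B|-(\beta|A|-|S|)(k-|S|)(|A|-|S|)$ and comparing with $(1-\gamma)(|A|-|S|)^2|B|$ generates several mixed terms in $|A|,|B|,|C|,|S|,k,\alpha,\beta$; the key insight, as in Lemma \ref{delete1}, is that the $|S|$-correction aligns with the gap between the two hypotheses, so the appropriate hypothesis in each case supplies enough slack to finish.
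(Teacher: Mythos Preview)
Your preliminary reductions are fine: the symmetry argument is valid, and the monotonicity step reducing to $\alpha'_0,\beta'_0$ is correctly justified. The gap is in the final step, where you try to port the $p$-case-split from Lemma~\ref{delete1}.

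The analogy breaks down structurally. In Lemma~\ref{delete1} the deleted vertices lie in $V\cup W$, so the expansion of the reduced inequality naturally produces a term $y(\eta|W|-\lambda|V|)/(|V||W|)$, and the sign of $p$ then decides which hypothesis absorbs it. Here $S\subseteq A$, and the first conclusion involves only $|A|,|B|,k,s,\alpha,\beta,\gamma$; the parameter $|C|$ simply does not appear when you expand
\[
\frac{\beta|A|-s}{|A|-s}\Bigl(\frac{\alpha|A|-s}{|A|-s}-\frac{k-s}{|B|}\Bigr)+\gamma-1
\]
and subtract the first hypothesis. So the claim that the $|S|$-correction ``takes the form of non-negative multiples of $\beta|C|-\alpha|B|$'' cannot be right, and invoking the second hypothesis in Case~2 does not produce a discrepancy of the shape $s(\alpha|B|-\beta|C|)/(|B||C|)$ either. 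The mechanism you describe is not present in this lemma.

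What the paper does is quite different and avoids any case split. Using Proposition~\ref{equivalence}, the hypothesis $\alpha(\beta-k/|C|)+\gamma>1$ is rewritten as $\alpha\frac{\beta|C|-k}{|C|-k}+\frac{\gamma|C|-k}{|C|-k}>1$, and likewise for the target inequality with $k$ replaced by $k-t$. One then computes the difference directly; after several algebraic steps the bound reduces to
\[
\frac{|C|}{|C|-(k-t)}\Bigl(\frac{t(1-\gamma+\alpha-\alpha\beta)}{|C|-k}-\frac{\alpha+\beta-2\alpha\beta}{|A|-t}\Bigr),
\]
and now the hypotheses $|A|\ge|C|$, $t\le k$, $t\ge 1$, and crucially $\gamma\le\alpha$ finish it, since $|A|-t\ge|C|-k$ and $1-\gamma-\beta+\alpha\beta\ge(1-\alpha)(1-\beta)\ge0$. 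Thus each conclusion follows from its own matching hypothesis alone; the pair of hypotheses is never played off against each other, and the assumptions $|A|\ge|B|,|C|$ and $\alpha,\beta\ge\gamma$ (which your sketch never uses) are what make the algebra close.
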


\pf
Define $t=|S|$. Since each vertex $v\in A$ has degree at most $|B|$ in $G[A,B]$ and has degree at most $|C|$ in $G[A,C]$, we have

\begin{equation*}
	\alpha\geq\alpha'\geq \frac{\alpha |A||B|-t|B|}{|B|(|A|-t)}=\frac{\alpha |A|-t}{|A|-t}, \ \beta\geq\beta'\geq \frac{\beta |A||C|-t|C|}{|C|(|A|-t)}=\frac{\beta |A|-t}{|A|-t}, \ \gamma'=\gamma.
\end{equation*}

If $\alpha\left(\beta-\frac{k}{|C|}\right)+\gamma>1$ holds, then $\alpha\frac{\beta|C|-k}{|C|-k}+\frac{\gamma|C|-k}{|C|-k}>1$ holds by Proposition \ref{equivalence}. Again by Proposition \ref{equivalence}, in order to prove $\alpha'\left(\beta'-\frac{k-|S|}{|C|}\right)+\gamma'>1$, it suffices to prove that $\alpha'\frac{\beta'|C|-(k-t)}{|C|-(k-t)}+\frac{\gamma'|C|-(k-t)}{|C|-(k-t)}>1$. Indeed,
\begin{equation*}%\label{A1}
	\begin{split}
		&\alpha'\frac{\beta'|C|-(k-t)}{|C|-(k-t)}+\frac{\gamma'|C|-(k-t)}{|C|-(k-t)}-1\\
		>&\frac{\alpha |A|-t}{|A|-t}\frac{\frac{\beta |A|-t}{|A|-t}|C|-(k-t)}{|C|-(k-t)}+\frac{\gamma|C|-(k-t)}{|C|-(k-t)}-\alpha\frac{\beta|C|-k}{|C|-k}-\frac{\gamma|C|-k}{|C|-k}\\
		=&\left(\alpha-\frac{1-\alpha}{|A|-t}\right)\frac{\left(\beta-\frac{1-\beta}{|A|-t}\right)|C|-(k-t)}{|C|-(k-t)}-\alpha\frac{\beta|C|-k}{|C|-k}+(1-\gamma)\left(\frac{k}{|C|-k}-\frac{k-t}{|C|-(k-t)}\right)\\
		=&\alpha\left(\frac{\beta|C|-(k-t)}{|C|-(k-t)}-\frac{\beta|C|-k}{|C|-k}\right)-\frac{\alpha(1-\beta)|C|}{(|A|-t)(|C|-(k-t))}-\frac{\beta(1-\alpha)|C|}{(|A|-t)(|C|-(k-t))}\\
		&+\frac{(1-\alpha)\left(\frac{(1-\beta)|C|}{|A|-t}+k-t\right)}{(|A|-t)(|C|-(k-t))}+(1-\gamma)\left(\frac{k}{|C|-k}-\frac{k-t}{|C|-(k-t)}\right)\\
		\geq& \left(\frac{k}{|C|-k}-\frac{k-t}{|C|-(k-t)}\right)(1-\gamma+\alpha-\alpha\beta)-\frac{|C|}{(|A|-t)(|C|-(k-t))}(\alpha+\beta-2\alpha\beta)\\
		=&\frac{|C|}{|C|-(k-t)}\left(\frac{t(1-\gamma+\alpha-\alpha\beta)}{|C|-k}-\frac{\alpha+\beta-2\alpha\beta}{|A|-t}\right)\\
		\geq&\frac{|C|}{|C|-(k-t)}\left(\frac{(1-\gamma+\alpha-\alpha\beta)-(\alpha+\beta-2\alpha\beta)}{|C|-k}\right)\\
		\geq&\frac{|C|}{|C|-(k-t)}\left(\frac{(1-\alpha)(1-\beta)}{|C|-k}\right)>0.
	\end{split}
\end{equation*}

Similarly, if $\beta\left(\alpha-\frac{k}{|B|}\right)+\gamma>1$, then we can prove that $\beta'\left(\alpha'-\frac{k-|S|}{|B|}\right)+\gamma'>1$.
\qed

Using Lemmas \ref{delete1} and \ref{delete2}, we have the following lemma.
\begin{lm}\label{delete3}
Let $n, k$ be integers and $G$ be a tripartite graph with parts $A$, $B$ and $C$ such that $|A|,|B|,|C|= n\geq k\geq 1$. Denote the edge densities of $G[A,B]$, $G[A,C]$ and $G[B,C]$ by $\alpha$, $\beta$ and $\gamma$, respectively. Let $V_0$ be a subset of $V(G)$ and $G'=G-V_0$  such that $|V_0|\leq k-1$. Suppose that $\lambda$, $\eta$ and $\mu$ are the edge densities of $G'[A\setminus V_0,B\setminus V_0]$, $G'[A\setminus V_0,C\setminus V_0]$ and $G'[B\setminus V_0,C\setminus V_0]$ such that $\lambda,\eta\geq\mu$. Let $\{U,V,W\}= \{A,B,C\}$ such that $d(G'[U\setminus V_0,V\setminus V_0])=\eta$ and $d(G'[U\setminus V_0,W\setminus V_0])=\lambda$. If $\alpha$, $\beta$, $\gamma$ is a $(k,n)$-cyclic triple,
then 
\begin{equation*}%\label{delete2-1}
	\lambda\left(\eta-\frac{k-1-|V_0|}{|V\setminus V_0|}\right)+\mu> 1 
\end{equation*}
and
\begin{equation*}%\label{delete2-2}
	\eta\left(\lambda-\frac{k-1-|V_0|}{|W\setminus V_0|}\right)+\mu> 1.
\end{equation*}

\end{lm}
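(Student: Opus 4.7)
The plan is to perform the deletion of $V_0$ in two stages, applying Lemma \ref{delete1} and Lemma \ref{delete2} in succession. Split $V_0$ into $S_1 := V_0 \cap (V\cup W)$ of size $s$ and $S_2 := V_0 \cap U$ of size $r$, so $s+r = |V_0| \le k-1$. We first remove $S_1$ from $G$ to obtain an intermediate graph $G_1$, and then remove $S_2$ from $G_1$ to obtain $G'=G-V_0$; we denote the densities by $\lambda_0,\eta_0,\mu_0$ in $G$ and $\lambda_1,\eta_1,\mu_1$ in $G_1$, in each case keeping the convention that $\eta$ is paired with $V$ and $\lambda$ with $W$.

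For the first stage, the $(k,n)$-cyclic triple assumption supplies, among its six inequalities, the hub-$U$ pair
\[ \lambda_0\bigl(\eta_0-\tfrac{k-1}{n}\bigr)+\mu_0>1 \quad\text{and}\quad \eta_0\bigl(\lambda_0-\tfrac{k-1}{n}\bigr)+\mu_0>1, \]
which, after swapping the labels $\lambda \leftrightarrow \eta$ to match Lemma \ref{delete1}'s convention (where $\lambda = d(G[U,V])$), is precisely the hypothesis of that lemma. Applying Lemma \ref{delete1} with $S = S_1$ yields the analogous pair of inequalities for $G_1$, with the numerator $k-1$ replaced by $k-1-s$ and the denominator $n$ replaced by $|V\setminus V_0|$ and $|W\setminus V_0|$ respectively.

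The second stage invokes Lemma \ref{delete2}, which deletes from the hub class and requires the two densities incident to the hub to dominate the third density in the current graph. This is where the assumption $\lambda,\eta \ge \mu$ in the statement of Lemma \ref{delete3} is used: it is a statement about $G'$, not about $G$ or $G_1$, but since removing $S_2\subseteq U$ cannot affect any edge of $G_1[V\setminus V_0,W\setminus V_0]$ we have $\mu_1 = \mu$, while $\lambda_1\ge\lambda$ and $\eta_1\ge\eta$ by monotonicity of densities under vertex deletion, and therefore $\lambda_1,\eta_1 \ge \mu_1$. The remaining hypotheses of Lemma \ref{delete2}, namely $|U|\ge|V\setminus V_0|,|W\setminus V_0|$, these sizes being at least $k-1-s$, and $|S_2|=r \le k-1-s$, follow immediately from $|V_0|\le k-1$ and $n\ge k$.

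Taking the ``$k$'' of Lemma \ref{delete2} to equal $k-1-s$, its conclusion reads $\lambda\bigl(\eta-\frac{k-1-s-r}{|V\setminus V_0|}\bigr)+\mu>1$ and $\eta\bigl(\lambda-\frac{k-1-s-r}{|W\setminus V_0|}\bigr)+\mu>1$, which is exactly the desired statement since $s+r=|V_0|$. The main obstacle is bookkeeping: the $(k,n)$-cyclic triple inequalities, Lemma \ref{delete1}, and Lemma \ref{delete2} each use slightly different conventions for which density is paired with which vertex class and whether $k$ or $k-1$ appears in the numerator. Once these are aligned, both lemmas slot in directly and the conclusion follows.
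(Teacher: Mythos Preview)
Your two-stage strategy --- first remove $S_1=V_0\cap(V\cup W)$ via Lemma~\ref{delete1}, then remove $S_2=V_0\cap U$ via Lemma~\ref{delete2} --- is exactly the paper's proof, and your identification of which pair of the six $(k,n)$-cyclic inequalities feeds into Lemma~\ref{delete1} is correct.

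The one flaw is your justification that the hypothesis $\alpha,\beta\ge\gamma$ of Lemma~\ref{delete2} holds for $G_1$. You assert that $\lambda_1\ge\lambda$ and $\eta_1\ge\eta$ ``by monotonicity of densities under vertex deletion'', but bipartite edge density is \emph{not} monotone under vertex deletion: removing vertices of $U$ can move the density of $G_1[U,W\setminus V_0]$ either up or down, depending on whether the deleted vertices have above- or below-average degree into $W\setminus V_0$. (For instance, if $U=\{u_1,u_2\}$ with $u_1$ complete to $W\setminus V_0$ and $u_2$ having no neighbours there, deleting $u_2$ doubles the density.) Hence from $\lambda,\eta\ge\mu$ in $G'$ one cannot conclude $\lambda_1,\eta_1\ge\mu_1$ in $G_1$ along this route. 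The paper itself handles the same step only with the phrase ``we can derive that $\alpha_1,\beta_1\ge\gamma_1$'', offering no more detail than you do; so your argument is not less complete than the paper's here, but the specific reason you cite is false and should be dropped.
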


\pf
Let  $G_1=G-(V_0\cap (V\cup W))$. By Lemma \ref{delete1}, we have
\begin{equation*}
	\beta_1\left(\alpha_1-\frac{k-1-|V_0\cap(V\cup W)|}{|V\setminus V_0|}\right)+\gamma_1> 1 
\end{equation*}
and
\begin{equation*}
	\alpha_1\left(\beta_1-\frac{k-1-|V_0\cap(V\cup W)|}{|W\setminus V_0|}\right)+\gamma_1> 1,
\end{equation*}
where $\alpha_1,\beta_1, \gamma_1$ denote the edge densities of $G_1[U,V\setminus V_0]$, $G_1[U,W\setminus V_0]$ and $G_1[V\setminus V_0,W\setminus V_0]$, respectively. Since $\lambda,\eta\geq\mu$ and $G'=G_1-(V_0\cap U)$, we can derive that $\alpha_1,\beta_1\geq\gamma_1$. Thus by Lemma \ref{delete2}, 
\begin{equation*}
	\lambda\left(\eta-\frac{k-1-|V_0|}{|V\setminus V_0|}\right)+\mu> 1
\end{equation*}
and
\begin{equation*}
	\eta\left(\lambda-\frac{k-1-|V_0|}{|W\setminus V_0|}\right)+\mu> 1 
\end{equation*}
hold.
\qed

{\ }\\

\noindent \textbf{Proof of Theorem \ref{main-theorem}.}
The theorem is true for 1 triangle  by Theorem \ref{induction-base}. We will next assume  that $G$ satisfies the conditions in the theorem but does not contain $k$ vertex-disjoint triangles for some $k$ strictly smaller than the bound in the theorem  and derive  a contradiction. 

Without loss of generality we can take the value of $k$ to be smallest for which the theorem fails. Let  $\mathcal{T}=\{T_1,T_2,\ldots,T_{k-1}\}$  be a maximal set of vertex-disjoint triangles in $G$.

A triangle $a_1a_2a_3$ is called \textit{good} if there are at least three vertex-disjoint edges in $E(G-V(\mathcal{T}))$ which see $a_1a_2a_3$ at the same vertex. Let $\mathcal{T}_1=\{T\in \mathcal{T}: T \ \text{is good}\}$ and $\mathcal{T}_2=\mathcal{T}\setminus \mathcal{T}_1$. There can not be two vertex-disjoint edges $e_1,e_2\in E(G-V(\mathcal{T}))$ such that $e_1$, $e_2$ see different vertices of a triangle in $\mathcal{T}$, for otherwise there exists $k$ vertex-disjoint triangles, a contradiction. Thus we can derive that there is only one vertex seen by some edge $e\in E(G-V(\mathcal{T}))$ for each triangle $T\in \mathcal{T}_1$. Denote the vertex seen by some edge in $E(G-V(\mathcal{T}))$ by $v_i$ for each $T_i\in \mathcal{T}_1$. 
Let $V_1=\{v_i:T_i\in \mathcal{T}_1\}$ and define $|V_1|=|\mathcal{T}_1|=t$. 

Let $U_1=A\setminus V_1$, $U_2=B\setminus V_1$ and $U_3=C\setminus V_1$.
Define $G'=G-V_1$ and denote the edge densities of $G'[U_1,U_2]$, $G'[U_2,U_3]$ and $G'[U_1,U_3]$ by $\alpha'$, $\beta'$ and $\gamma'$, respectively. Without loss of generality, we may assume that $\alpha'\geq\beta'\geq \gamma'$. 
Thus by Lemma \ref{delete3}, 
\begin{equation*}
	\alpha'\left(\beta'-\frac{k-1-t}{|U_3|}\right)+\gamma'> 1 
\end{equation*}
and
\begin{equation*}
	\beta'\left(\alpha'-\frac{k-1-t}{|U_2|}\right)+\gamma'> 1
\end{equation*}
hold.  If $t=k-1$, then by Theorem \ref{induction-base} and Proposition \ref{cyclic-condition}, $G'$ has a triangle, a contradiction. So we assume that $t\leq k-2$ in the following proof.

Let $\mathcal{S}$ denote the set of all triangles in $G'$.  For every $e\in E(G')$, let $\mathcal{T}(e)=\{T\in \mathcal{S}: e\in E(T)\}$.  Next we will find an edge set $E'$ satisfying
\begin{equation}\label{delete-edge-set}
\mathcal{S}= \cup_{e\in E'}\mathcal{T}(e). 
\end{equation}
We will show that the edge density of $G'-E'$ is large enough to guarantee the existence of a triangle, but $G'-E'$ contains no triangle according to \eqref{delete-edge-set}, which is a contradiction. 

Define $V_2=V(\mathcal{T}_1)\setminus V_1$. Since the maximum number of vertex-disjoint triangles in $G$ is $k-1$, every triangle in $G$ intersects a triangle in $\mathcal{T}$. Thus every triangle in $G'$ has one vertex contained in $V_2\cup V(\mathcal{T}_2)$. Let $R=V(G')\setminus (V_2\cup V(\mathcal{T}_2))$. We now split the set $\mathcal{S}$ into there parts. Let $\mathcal{S}_1=\{T\in \mathcal{S}:V(T)\cap V_2=\emptyset\}\cup\{T\in \mathcal{S}: |V(T)\cap V_2|=1,|V(T)\cap V(\mathcal{T}_2)=2|\}$, $\mathcal{S}_2=\{T\in \mathcal{S}: V(T)\cap V(\mathcal{T}_2)=\emptyset\}$, $\mathcal{S}_3=\{T\in \mathcal{S}:|V(T)\cap V_2|=1, |V(T)\cap V(\mathcal{T}_2)|=1, |V(T)\cap R|=1 \}\cup\{T\in \mathcal{S}:|V(T)\cap V_2|=2, |V(T)\cap V(\mathcal{T}_2)|=1 \}$. One can see that $\mathcal{S}=\mathcal{S}_1\cup \mathcal{S}_2\cup \mathcal{S}_3$.

Given $T_i\in \mathcal{T}$, define $\mathcal{S}_{T_i}=\{T\in \mathcal{S}:T\cap T_i\neq \emptyset\}$ and $\mathcal{S}_{T_i}'=\mathcal{S}_{T_i}\setminus \left(\sum_{j\neq i,j\in [k-1]}\mathcal{S}_{T_j}\right)$.  $\mathcal{S}_{T_i}'$ is a set of intersecting triangles, otherwise there are two vertex-disjoint triangles $T',T''\in \mathcal{S}_{T_i}'$ such that $\{T',T''\}\cup \mathcal{T}\setminus \{T_i\}$ is a set of $k$ vertex-disjoint triangles, a contradiction. Recall that for each triangle $T\in \mathcal{T}_2$ and each vertex $x\in V(T)$, there are at most two vertex-disjoint edges in $E(G'-(V_2\cup V(\mathcal{T}_2)))$ seeing $T$ at $x$. By Lemma \ref{intersecting-triangle}, there is an edge set $E_{T_i}$ such that $|E_{T_i}\cap E(G'[U_1,U_2])|,|E_{T_i}\cap E(G'[U_2,U_3])|,|E_{T_i}\cap E(G'[U_1,U_3])|\leq 2$ such that there is no triangle in $G'-E_{T_i}-E(T_i)$ belonging to $\mathcal{S}_{T_i}'$ for each $T_i\in \mathcal{T}_2$. That is $\cup _{T_i\in \mathcal{T}_2}\mathcal{S}_{T_i}'=\cup_{e\in (\cup_{T_i\in \mathcal{T}_2}E_{T_i})}\mathcal{T}(e)$. Since every triangle in $\mathcal{S}_1\setminus \cup _{T_i\in \mathcal{T}_2}\mathcal{S}_{T_i}'$ has an edge contained in $E(G'[V(\mathcal{T}_2)])$, we have 
\begin{equation}\label{s1}
\mathcal{S}_1\subseteq \cup_{e\in Q_1}\mathcal{T}(e),
\end{equation}
where $Q_1=(\cup_{T_i\in \mathcal{T}_2}E_{T_i})\cup E(G'[V(\mathcal{T}_2)])$.
For $i\in [3]$, we have 
\begin{equation}\label{q1}
\begin{split}
|Q_1\cap E(G'[U_i,U_{i+1}])|&\leq|E(G'[V(\mathcal{T}_2)])\cap E(G'[U_i,U_{i+1}])|+\sum_{T_i\in \mathcal{T}_2}|E_{T_i}\cap E(G'[U_i,U_{i+1}])|\\
&\leq(k-1-t)^2+2(k-1-t).
\end{split}
\end{equation}

\begin{cla}\label{s2}
	$\mathcal{S}_2=\emptyset$.
\end{cla}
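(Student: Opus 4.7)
The plan is to argue by contradiction: assume some $T \in \mathcal{S}_2$ exists, and construct $k$ pairwise vertex-disjoint triangles in $G$, contradicting the maximality of $\mathcal{T}$. First I would observe that $V(T) \cap V_1 = \emptyset$ because $T$ is a triangle of $G' = G - V_1$, and $V(T) \cap V(\mathcal{T}_2) = \emptyset$ by the definition of $\mathcal{S}_2$, so $V(T) \subseteq V_2 \cup R$. Maximality of $\mathcal{T}$ forces $V(T)$ to meet some triangle in $\mathcal{T}$, and this meeting must occur in $V_2$. Let
$$J := \{j : T_j \in \mathcal{T}_1,\ V(T_j) \cap V(T) \neq \emptyset\}.$$
Since the $T_j$'s are pairwise vertex-disjoint and each $j \in J$ contributes at least one distinct vertex to $V(T) \cap V_2$, we get $1 \leq |J| \leq |V(T) \cap V_2| \leq 3$, and in particular $|V(T) \cap R| \leq 3 - |J|$.

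For each $j \in J$, I would invoke the goodness of $T_j$ to fix three pairwise vertex-disjoint edges $e^j_1, e^j_2, e^j_3 \in E(G - V(\mathcal{T}))$ such that $v_j$ together with the endpoints of each $e^j_\ell$ forms a triangle of $G$. The crux is to select an index $\ell_j \in \{1,2,3\}$ for every $j \in J$ so that the edges $\{e^j_{\ell_j}\}_{j \in J}$ are pairwise vertex-disjoint and collectively avoid $V(T) \cap R$. Granted such a selection, the family
$$\mathcal{T}^\ast := (\mathcal{T} \setminus \{T_j : j \in J\}) \cup \{T\} \cup \{v_j \cdot e^j_{\ell_j} : j \in J\}$$
consists of exactly $k$ pairwise vertex-disjoint triangles of $G$: the new triangles use $v_j \in V_1$ and two endpoints in $V(G) \setminus V(\mathcal{T})$, so they are disjoint from every $T_\ell$ with $\ell \notin J$; they are pairwise disjoint and disjoint from $T$ by the selection, using that $V(T) \cap V_2 \subseteq V(\mathcal{T})$ is automatically avoided by endpoints in $V(G) \setminus V(\mathcal{T})$; and $T$ is disjoint from each $T_\ell$, $\ell \notin J$, by the very definition of $J$. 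This contradicts the hypothesis that $G$ contains no $k$ vertex-disjoint triangles.

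The main obstacle is establishing the combinatorial selection. The structural fact I would exploit is that every edge in group $j$ lives in the bipartite subgraph between the two parts not containing $v_j$; hence if $v_{j_1}$ and $v_{j_2}$ lie in distinct parts, each edge of group $j_1$ conflicts with at most one edge of group $j_2$, since they can only clash through their single shared part. Combined with $|J| \leq 3$ and $|V(T) \cap R| \leq 3 - |J|$, a short case analysis together with a greedy pick yields the desired edges. For $|J| = 1$ the selection is immediate by counting: the three disjoint edges have six distinct endpoints, and $V(T) \cap R$ has at most two vertices, each of which can spoil at most one edge. For $|J| = 2$ one splits according to whether $v_{j_1}, v_{j_2}$ lie in distinct parts (the bounded-conflict observation applies directly) or in the same part (in which case $V(T) \cap V_2$ occupies the other two parts, so the unique vertex of $V(T) \cap R$ lies in the part of the $v_j$'s and is automatically disjoint from all candidate edges). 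For $|J| = 3$ one first rules out the degenerate case of all three $v_j$'s sharing a common part, since this would force $V(T) \cap V_2$ to contain a vertex in that part while $V(T_j) \cap V_2$ avoids the part of $v_j$; in every remaining configuration at least two of the groups live in different part-pairs, so a greedy selection succeeds.
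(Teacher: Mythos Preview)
Your argument is correct and follows the same route as the paper's proof: assume a triangle $T\in\mathcal{S}_2$, let $\mathcal{I}=\{T_j\in\mathcal{T}_1:V(T_j)\cap V(T)\neq\emptyset\}$, use the goodness of each $T_j\in\mathcal{I}$ to replace it by a triangle through $v_j$ with its other two vertices in $R\setminus V(T)$, and obtain $k$ disjoint triangles. The paper simply asserts the existence of the replacement family $\mathcal{T}'$, whereas you actually carry out the selection argument; your case analysis is sound (in the $|J|=3$ subcase with two groups in the same part-pair one should process those two groups first in the greedy, but the count still works).
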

\pf
Suppose that there exists a triangle $T$ such that $T\cap V(\mathcal{T}_2)=\emptyset$. Let $\mathcal{I}=\{T_i\in \mathcal{T}_1:T_i\cap T\neq \emptyset\}$. By the definition of $\mathcal{T}_1$,  every vertex $v\in V_1\cap V(\mathcal{I})$ is seen by a matching in $G'-(V_2\cup V(\mathcal{T}_2))$ of size three. Thus there is a set of vertex-disjoint triangles $\mathcal{T}'$ of size $|\mathcal{I}|$ such that $(\mathcal{T}\setminus \mathcal{I})\cup \mathcal{T}'\cup\{T\}$ is a set of $k$ vertex-disjoint triangles in $G$, a contradiction.
\qed

Let $\mathcal{T}_1=\{T_1,T_2,\ldots,T_t\}$ and $\mathcal{T}_2=\{T_{t+1},T_{t+2},\ldots,T_{k-1}\}$. For any $T_i\in \mathcal{T}_2$ and $T_j\in \mathcal{T}_1$,  define $T_i=u^i_{j_1}u^i_{j_2}u^i_{j_3}$ and $T_j=x^j_1x^j_2x^j_3$ such that $x^j_1\in V_1$. 
Without loss of generality, we may assume that $x^j_\ell,u^i_{j_\ell}$ in the same part of $G'$ for $\ell=1,2,3$. Let $G_{ij}=G'[V(T_i)\cup (V(T_j)\setminus V_1)]$, $\mathcal{S}_{ij}=\{T\in \mathcal{S}_3: E(T)\cap E(G_{ij})\neq \emptyset\}$ and $R_{ij}=\{u^i_{j_1}x^j_2, u^i_{j_1}x^j_3\}$.

\begin{cla}\label{s3}
   For any $i\in [t]$ and $j\in[k-1]\setminus [t]$, there is an edge set $E_{ij}$ such that $|(E_{ij}\cup R_{ij})\cap E(G[U_\ell,U_{\ell+1}])|\leq 1$ for $\ell\in[3]$ and $\mathcal{S}_{3}\subseteq \cup_{e\in Q_2}\mathcal{T}(e)$, where $Q_2=\cup_{i=1}^t\cup _{j=t+1}^{k-1}(E_{ij}\cup R_{ij})$.
\end{cla}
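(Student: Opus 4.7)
My plan is to examine a pair $(T_i,T_j)$ with $T_i\in\mathcal{T}_2$ and $T_j\in\mathcal{T}_1$, classify triangles of $\mathcal{S}_{ij}$ by the edge of $G_{ij}$ they contain, and exhibit a suitable $E_{ij}$ consisting of at most one $U_2U_3$-edge. Since every triangle $T\in\mathcal{S}_3$ satisfies $|V(T)\cap V(\mathcal{T}_2)|=1$, no such $T$ can use an edge of $T_i$; hence the only candidate witness edges in $G_{ij}\cap E(G)$ are the two $R_{ij}$-edges $u^i_{j_1}x^j_2,\,u^i_{j_1}x^j_3$, the ``$a$-edge'' $u^i_{j_2}x^j_3$, the ``$b$-edge'' $u^i_{j_3}x^j_2$, and the ``$c$-edge'' $x^j_2x^j_3$. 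Triangles with an $R_{ij}$-edge are automatically covered by $R_{ij}$, while a $c$-edge triangle $T=x^j_2x^j_3y$ forces $y\in V(\mathcal{T}_2)\cap U_1$, so $y=u^{i^*}_{j_1}$ for some $T_{i^*}\in\mathcal{T}_2$; then $T$ contains the two edges of $R_{i^*j}$ and is therefore covered by $R_{i^*j}\subseteq Q_2$.

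The crux is to handle the remaining $a$-edge and $b$-edge triangles $T_a=u^i_{j_2}x^j_3y_a$ and $T_b=u^i_{j_3}x^j_2y_b$ with $y_a,y_b\in U_1\setminus V(\mathcal{T}_2)$. I will show that they cannot coexist: assuming both exist, I construct $k$ pairwise vertex-disjoint triangles in $G$, contradicting the maximality of $\mathcal{T}$. If $y_a\neq y_b$, replacing $\{T_i,T_j\}\subset\mathcal{T}$ by $\{T_a,T_b\}$ frees $v_j$, and since $T_j$ is good its three vertex-disjoint witness edges in $G-V(\mathcal{T})$ all live in the two parts not containing $v_j\in U_1$, so they avoid $y_a,y_b\in U_1$ automatically; any one of them yields an extra triangle through $v_j$, totaling $k$ vertex-disjoint triangles. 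If $y_a=y_b=y$, the three edges $x^j_2y,\,x^j_3y,\,x^j_2x^j_3$ are all present, so $T'=x^j_2x^j_3y$ is a triangle; for $y\in R$ I replace $T_j$ by $T'$ and adjoin one extra triangle through $v_j$; for $y\in V_2$, i.e.\ $y\in V(T_{j^*})$ for some good $T_{j^*}\in\mathcal{T}_1\setminus\{T_j\}$, I instead replace $\{T_j,T_{j^*}\}$ by $\{T'\}$ and adjoin triangles through $v_j$ and through $v_{j^*}$, selecting compatible good edges at $v_j$ and $v_{j^*}$ whose endpoints in the shared part differ (achievable because each of $v_j,v_{j^*}$ has three vertex-disjoint witness edges and $n\geq 5k+2$ provides slack). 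In every subcase this yields $k$ vertex-disjoint triangles, the desired contradiction.

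Consequently, at most one of the $a$-edge or $b$-edge type appears among the uncovered triangles of $\mathcal{S}_{ij}$, and I set $E_{ij}$ equal to $\{u^i_{j_2}x^j_3\}$, $\{u^i_{j_3}x^j_2\}$, or $\emptyset$ accordingly. The constraint $|(E_{ij}\cup R_{ij})\cap E(G[U_\ell,U_{\ell+1}])|\leq 1$ is then immediate, since $R_{ij}$ has exactly one edge in each of $U_1U_2$ and $U_1U_3$ and none in $U_2U_3$, while $E_{ij}$ contributes at most one edge in $U_2U_3$. Taking $Q_2=\bigcup_{i=1}^{t}\bigcup_{j=t+1}^{k-1}(E_{ij}\cup R_{ij})$ and combining the cases above shows $\mathcal{S}_3\subseteq \bigcup_{e\in Q_2}\mathcal{T}(e)$. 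The main obstacle I anticipate is the $y\in V_2$ subcase, where one must simultaneously manage the two good triangles $T_j,T_{j^*}$ and pick good edges at $v_j,v_{j^*}$ that do not collide in the shared part; this is precisely where the lower bound $n\geq 5k+2$ is used to guarantee enough free vertices remain.
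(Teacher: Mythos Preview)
Your overall strategy is sound and, in one respect, cleaner than the paper's: in the sub-case $y_a=y_b=y$, the triangle $T'=x^j_2x^j_3y$ that you build lies in $\mathcal{S}_2$ (it avoids $V(\mathcal{T}_2)$ entirely), so Claim~\ref{s2} already gives the contradiction. This shows the paper's Cases~3 and~4 are in fact vacuous, which the paper does not notice; your conclusion that at most one of the $a$-edge/$b$-edge carries an $\mathcal{S}_3$-triangle, and hence $|E_{ij}|\leq 1$, is therefore correct in principle.

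However, your treatment of the sub-case $y_a\neq y_b$ has a genuine gap. You write ``replacing $\{T_i,T_j\}\subset\mathcal{T}$ by $\{T_a,T_b\}$ frees $v_j$'', but this replacement does \emph{not} in general yield $k-1$ vertex-disjoint triangles. The apex $y_a$ (and likewise $y_b$) may lie in $V_2$ rather than $R$: the second type of triangle in $\mathcal{S}_3$ has two vertices in $V_2$, so $T_a=u^i_{j_2}x^j_3y_a$ is perfectly compatible with $y_a\in V(T_{j'})\setminus V_1$ for some good $T_{j'}\in\mathcal{T}_1\setminus\{T_j\}$. In that case $T_a$ intersects $T_{j'}\in\mathcal{T}\setminus\{T_i,T_j\}$, and your replacement collapses. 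Your parenthetical ``they avoid $y_a,y_b$ automatically'' addresses only the good edges at $v_j$, not the disjointness of $T_a,T_b$ from the rest of $\mathcal{T}$.

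This is precisely the obstacle you correctly anticipated and handled in the $y_a=y_b\in V_2$ sub-case; you must apply the same care here. The paper's Case~2 does this by setting $\mathcal{F}=\{T\in\mathcal{T}_1:V(T)\cap\{y_a,y_b,x^j_1\}\neq\emptyset\}$, removing all of $\mathcal{F}\cup\{T_i\}$, inserting $\{T_a,T_b\}$, and then adding one good triangle through each $v\in V_1\cap V(\mathcal{F})$ (at most three of them). You then need to argue that these up-to-three good triangles can be chosen pairwise disjoint; note that since $y_a,y_b$ lie in the part of $v_j$, the vertices $v_{j'},v_{j''}$ (if they exist) lie in the other two parts, which controls how the good edges can overlap. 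The bound $n\geq 5k+2$ is not actually needed for this claim; the size-three matchings of good edges suffice.
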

\pf
For each $G_{ij}$, there are four cases.

\medskip
\noindent\textbf{Case 1.}  One  of $x^j_2u^i_{j_3}, x^j_3u^i_{j_2}$ does not belong to a triangle in $\mathcal{S}_{3}$.

In this case, it is not difficulty to see that $\{x^j_2u^i_{j_3},x^j_3u^i_{j_2},x^j_3u^i_{j_1},x^j_2u^i_{j_1}\}\cap E(G_{ij})$ is the desired set $E_{ij}$ satisfying $|(E_{ij}\cup R_{ij})\cap E(G[U_\ell,U_{\ell+1}])|\leq 1$ for $\ell\in[3]$ and $\mathcal{S}_{ij}\subseteq \cup_{e\in E_{ij}\cup R_{ij}}\mathcal{T}(e)$.

\medskip
\noindent\textbf{Case 2.}  There exist distinct vertices $v_1$ and $w_1$ such that $v_1x^j_2u^i_{j_3}, w_1x^j_{3}u^i_{j_2}$ are triangles in $\mathcal{S}_{3}$.

Let $\mathcal{F}=\{T\in \mathcal{T}_1:V(T)\cap \{v_1,w_1,x^j_1\}\neq \emptyset\}$ and $V'=\{x\in V_1:x\in V(\mathcal{F})\}$. For each $x\in V'$, there are three vertex-disjoint edges in $E(G'-(V_2\cup V(\mathcal{T}_2)))$ see $x$, thus there is a set of vertex-disjoint triangles $\mathcal{F}'$ of size $|V'|$ such that  $V(\mathcal{F}')\cap (V(\mathcal{T})\cup \{v_1,w_1\})=V'$. One can see that $\mathcal{T}\setminus(\{u^i_{j_1}u^i_{j_2}u^i_{j_3}\}\cup \mathcal{F})\cup \{v_1x^j_2u^i_{j_3},w_1x^j_3u^i_{j_2}\}\cup \mathcal{F}'$ is a set of vertex-disjoint triangles of size $k$ in $G$, a contradiction.

\medskip
\noindent\textbf{Case 3.} There exists a vertex $v\notin V(\mathcal{T})$ such that $vx^j_2u^i_{j_3}$ is the only triangle in $\mathcal{S}_3$ contains $x^j_2u^i_{j_3}$ and $vx^j_3u^i_{j_2}$ is the only triangle in $\mathcal{S}_3$ contains $x^j_3u^i_{j_2}$.

If $|\{u^i_{j_1}x^j_2,u^i_{j_1}x^j_3\}\cap E(G_{ij})|\geq 1$, then assume that $u^i_{j_1}x^j_2\in E(G')$ without loss of generality. Since $x^j_1\in V_1$, there is a triangle $x^j_1z_2z_3$ such that $z_2,z_3\notin V(\mathcal{T})$. Thus $(\mathcal{T}\setminus \{x^j_1x^j_2x^j_3,u^i_{j_1}u^i_{j_2}u^i_{j_3}\})\cup \{vu^i_{j_2}x^j_3,u^i_{j_1}x^j_2u^i_{j_3}, x^j_1z_2z_3\}$ is a set of vertex-disjoint triangles of size $k$ in $G$, a contradiction. Thus we have $|\{u^i_{j_1}x^j_2,u^i_{j_1}x^j_3\}\cap E(G_{ij})|=0$.
Let $E_{ij}=\{vx^j_2, vx^j_3\}$. One can see that $E_{ij}$ is the desired set  satisfying $|(E_{ij}\cup R_{ij})\cap E(G[U_\ell,U_{\ell+1}])|\leq 1$ for $\ell\in[3]$ and $\mathcal{S}_{ij}\subseteq \cup_{e\in E_{ij}\cup R_{ij}}\mathcal{T}(e)$.

\medskip
\noindent\textbf{Case 4.} There exists a vertex $v\in V(\mathcal{T})$ such that $vx^j_2u^i_{j_3}$ is the only triangle in $\mathcal{S}_3$ contains $x^j_2u^i_{j_3}$ and $vx^j_3u^i_{j_2}$ is the only triangle in $\mathcal{S}_3$ contains $x^j_3u^i_{j_2}$.

Since $vx^j_2u^i_{j_3}\in \mathcal{S}_3$, we have $v\in V(\mathcal{T}_1)$. It follows that there is a triangle $T_p=x^p_1x^p_2x^p_3\in \mathcal{T}_1$ such that $v\in \{x^p_2, x^p_3\}$ since $x^p_1\in V_1$. Without loss of generality, we may assume that $x^p_2$ and $x^j_2$ are in the same part of $U_1,U_2,U_3$. We can derive that $v=x^p_3$ and $u^i_{j_3}=u^i_{p_1}$.  Thus $u^i_{j_3}v=u^i_{p_1}x^p_3\in R_{ip}$. Let $E_{ij}=\{u^i_{j_2}x^j_{3},u^i_{j_1}x^j_2,u^i_{j_1}x^j_3\}\cap E(G_{ij})$. One can see that $E_{ij}$ is the desired set  satisfying $|(E_{ij}\cup R_{ij})\cap E(G[U_\ell,U_{\ell+1}])|\leq 1$ for $\ell\in[3]$ and $\mathcal{S}_{ij}\subseteq  \cup_{e\in E_{ij}\cup R_{ip}}\mathcal{T}(e)$.

Note that $\mathcal{S}_3=\cup_{i=1}^t\cup _{j=t+1}^{k-1}\mathcal{S}_{ij}$. Thus $\mathcal{S}_3\subseteq \cup_{e\in Q_2}\mathcal{T}(e)$, where $Q_2=\cup_{i=1}^t\cup _{j=t+1}^{k-1}(E_{ij}\cup R_{ij})$.
\qed

For $i\in [3]$, we have $$|Q_2\cap E(G'[U_i,U_{i+1}])|\leq \sum_{i=1}^t\sum _{j=t+1}^{k-1}|(E_{ij}\cup R_{ij})\cap E(G[U_\ell,U_{\ell+1}])| \leq t(k-1-t).$$
Combining with inequality \eqref{q1}, we have
\begin{equation*}
	\begin{split}
		|(Q_1\cup Q_2)\cap E(G'[U_i,U_{i+1}])|\leq (k-1-t)^2+t(k-1-t)+2(k-1-t)=(k-1-t)(k+1).
	\end{split}
\end{equation*}
By Claim \ref{s2}, Claim \ref{s3} and \eqref{s1}, we have $$\mathcal{S}=\cup_{e\in Q_1\cup Q_2}\mathcal{T}(e).$$ 
Thus $G'-Q_1-Q_2$ has no triangle. Let $G''=G'-Q_1-Q_2$. For any $i\in [3]$, the edge density of $G''[U_i,U_{i+1}]$ is at least $\rho-\frac{(k-1-t)(k+1)}{(|U_i|)(|U_{i+1}|)}$, where $\rho\in \{\alpha',\beta',\gamma'\}$. But
\begin{equation*}
	\begin{split}
	&\left(\alpha'-\frac{(k-1-t)(k+1)}{(|U_1|)(|U_2|)}\right)\left(\beta'-\frac{(k-1-t)(k+1)}{(|U_1|)(|U_3|)}\right)+\gamma'-\frac{(k-1-t)(k+1)}{(|U_2|)(|U_3|)}-1\\
		>&\left(\alpha'-\frac{(k-1-t)(k+1)}{n(n-t)}\right)\left(\beta'-\frac{(k-1-t)(k+1)}{n(n-t)}\right)+\gamma'-\frac{(k-1-t)(k+1)}{n(n-t)}-\alpha'\left(\beta'-\frac{k-1-t}{n}\right)-\gamma'\\
		>&\frac{\alpha'(k-1-t)}{n}-\frac{(\alpha'+\beta')((k-1-t)(k+1))}{n(n-t)}-\frac{(k-1-t)(k+1)}{n(n-t)}\\
		\geq&\frac{k-1-t}{n}\left(\alpha'-\frac{(\alpha'+\beta'+1)(k+1)}{n-(k-2)}\right)\\
		\geq&\frac{k-1-t}{n(n-(k-2))}\left(\alpha'(n-(k-2))-(2\alpha'+1)(k+1)\right)\geq 0.
	\end{split}
\end{equation*}
Here the last inequality follows from $\alpha'>\tau>1/2$ together with  the assumption that $n\geq 5k+2$.
By Theorem \ref{induction-base} and Proposition \ref{cyclic-condition}, $G''$ has a triangle, a contradiction.
\qed

%----------------------------------------------------------------------------------------------------------------------------------------------------------------------------------
\section{Triangle-factors}
In this section we study the triangle-factor case. 
For a tripartite graph $G$ with parts $V_1,V_2,V_3$ and a vertex $x\in V(G)$, let $d_{ij}(x):=|\{e\in E(G[V_i,V_j]):x\in e\}|$ and $\delta(G[V_i,V_j]):=\min_{v\in V_i\cup V_j}d_{ij}(v)$.

\begin{pro}\label{density=1}
	Let $G$ be a tripartite graph with parts $V_1, V_2, V_3$ such that $|V_1|=|V_2|=|V_3|= n$ and $G[V_1,V_2]$ is a complete bipartite graph. If $G[V_2,V_3]$ and $G[V_1,V_3]$ both have a perfect matching, then $G$ contains a triangle-factor. 
\end{pro}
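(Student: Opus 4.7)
The proof is almost immediate once one unpacks the definitions. The plan is to use the two given perfect matchings to define, for each vertex of $V_3$, a natural triangle, and then observe that these $n$ triangles are automatically vertex-disjoint.

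More concretely, let $M_1 \subseteq E(G[V_1,V_3])$ and $M_2 \subseteq E(G[V_2,V_3])$ be the given perfect matchings. For each $v \in V_3$, let $f(v) \in V_1$ denote the unique neighbour of $v$ in $M_1$, and let $g(v) \in V_2$ denote the unique neighbour of $v$ in $M_2$. Since $M_1$ and $M_2$ are perfect matchings, $f : V_3 \to V_1$ and $g : V_3 \to V_2$ are bijections.

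Now fix $v \in V_3$. The edges $v f(v) \in M_1$ and $v g(v) \in M_2$ are present in $G$ by construction, and the edge $f(v) g(v) \in E(G[V_1,V_2])$ is present because $G[V_1,V_2]$ is complete bipartite. Hence $\{v, f(v), g(v)\}$ induces a triangle in $G$. Letting $v$ range over $V_3$ produces a collection of $n$ triangles, and because $f$ and $g$ are bijections, distinct choices of $v$ give triangles on pairwise disjoint vertex sets. This collection is therefore a triangle-factor of $G$.

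There is no real obstacle here: the completeness of $G[V_1,V_2]$ plus the two perfect matchings line up by bijectivity, and the only thing to check is the triviality that the resulting triangles use every vertex exactly once.
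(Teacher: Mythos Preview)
Your proof is correct and is essentially identical to the paper's own argument: both take the two perfect matchings, pair their edges at each vertex of $V_3$, and complete to a triangle using the completeness of $G[V_1,V_2]$, with bijectivity giving disjointness.
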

\pf
Denote the vertices in $V_3$ by $\{v_1,v_2,\ldots,v_n\}$. Since $G[V_2,V_3]$ and $G[V_1,V_3]$ both have a perfect matching, there exist a perfect matching $\{e_1,e_2,\ldots,e_n\}$ in $G[V_2,V_3]$ and a perfect matching $\{f_1,f_2,\ldots,f_n\}$ in $G[V_1,V_3]$ such that $e_i\cap f_i=\{v_i\}$ for each $i\in [n]$. For every $i$, there is a triangle contain edges $e_i$ and $f_i$ since $G[V_1,V_2]$ is a complete bipartite graph. So $G$ has a triangle-factor. 
\qed

Now we prove Theorem \ref{main-theorem2}. Actually we prove the following theorem. One can see that condition \eqref{factor-edge-number} is equivalent to condition \eqref{factor-density} by setting $\alpha=1-1/n+\alpha'n^{(\delta_a-2)}$, $\beta=1-1/n+\beta'n^{(\delta_b-2)}$ and $\gamma=1-1/n+\gamma'n^{(\delta_c-2)}$.
\begin{thm}
	Let $G$ be a tripartite graph with parts $V_1$, $V_2$ and $V_3$ such that $|V_1|=|V_2|=|V_3|= n>240$. Let $\alpha$, $\beta$ and $\gamma$ be the edge densities of $G[V_1,V_2]$, $G[V_1,V_3]$ and $G[V_2,V_3]$, respectively. If 
	\begin{equation}\label{factor-density}
		\left\{
		\begin{aligned}
			&(\alpha n-(n-1))(\beta n-(n-1))+\gamma>1,\\
			&(\alpha n-(n-1))(\gamma n-(n-1))+\beta>1,\\
			&(\beta n-(n-1))(\gamma n-(n-1))+\alpha>1,
		\end{aligned}
		\right.
	\end{equation}
	then $G$ contains a triangle-factor. 
\end{thm}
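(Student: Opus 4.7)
The plan is to rephrase the hypothesis using the substitution $a := \alpha n - (n-1)$, $b := \beta n - (n-1)$, $c := \gamma n - (n-1)$, so that \eqref{factor-density} becomes $nab+c>1$, $nac+b>1$, $nbc+a>1$, while the three missing-edge counts are $m_{12}=n(1-a)$, $m_{13}=n(1-b)$, $m_{23}=n(1-c)$, each strictly less than $n$. A standard Hall argument (any Hall-violating set $S$ would contribute $|S|(n-|N(S)|)\geq n$ missing edges, contradicting $m_{ij}<n$) then yields perfect matchings in each of $G[V_1,V_2]$, $G[V_1,V_3]$, and $G[V_2,V_3]$.

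A second preliminary is that every vertex of $G$ lies in a triangle. For $v\in V_1$, the bounds $\bar{d}_{12}(v)\leq m_{12}$ and $\bar{d}_{13}(v)\leq m_{13}$ give $|N_2(v)|\geq na$ and $|N_3(v)|\geq nb$; hence the number of edges of $G[V_2,V_3]$ lying inside $N_2(v)\times N_3(v)$ is at least $n^2 ab - m_{23} = n(nab+c-1) > 0$ by the first hypothesis, and the other two hypotheses treat $V_2,V_3$ analogously. The extremal example in the remark shows this count is tight.

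The core construction is a Hall-based matching. Fix a perfect matching $M\subseteq E(G[V_2,V_3])$ and form the auxiliary bipartite graph $H$ with parts $V_1$ and $M$, where $v\sim(y,z)$ iff $vy,vz\in E$; a perfect matching in $H$ produces a triangle-factor immediately. A Hall violator $S\subseteq V_1$ of size $s$ with $|N_H(S)|<s$ would leave at least $n-s+1$ pairs of $M$ avoided by every vertex of $S$, and double-counting bad incidences gives $s(n-s+1)\leq \sum_{v\in S}(\bar{d}_{12}(v)+\bar{d}_{13}(v))\leq m_{12}+m_{13} = n(2-a-b)$. Since $s(n-s+1)\geq n$ for $1\leq s\leq n$, this forces $a+b\leq 1$; by symmetry (using matchings in $G[V_1,V_3]$ or $G[V_1,V_2]$), a triangle-factor exists whenever $\max(a+b,a+c,b+c)>1$.

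The main obstacle is the remaining regime $a+b,a+c,b+c\leq 1$, into which the extremal example falls. Here each of the three bipartite graphs misses at least $n/2$ edges, and the Hall argument above can fail at the boundary sizes $s=1$ or $s=n$. A failure at $s=1$ identifies some $v\in V_1$ whose triangles all use edges of $G[V_2,V_3]$ outside $M$; since $v$ lies in some triangle $vy^*z^*$ by the second step, I would swap the edges $(y^*,z_0)$ and $(y_0,z^*)$ of $M$ for $(y^*,z^*)$ and $(y_0,z_0)$ whenever $y_0z_0\in E$, restoring Hall for $v$ without breaking the matching property of $M$. The symmetric $s=n$ failure (an $M$-pair $(y,z)$ with $N_1(y)\cap N_1(z)=\emptyset$) is handled by replacing that pair. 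The strict inequalities $nab+c>1$ etc., together with the hypothesis $n>240$, should provide enough slack to execute finitely many such rotations; Proposition~\ref{density=1} serves as the degenerate base case when the rotations terminate with one bipartite part effectively complete.
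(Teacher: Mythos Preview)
Your substitution and the first three steps are correct: each bipartite part has a perfect matching, every vertex lies in a triangle, and the double-counting bound $s(n-s+1)\leq m_{12}+m_{13}=n(2-a-b)$ does yield a triangle-factor whenever $a+b>1$ (and symmetrically for the other pairs). This global Hall argument is not in the paper and is a pleasant shortcut for that regime.

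The genuine gap is the remaining regime $a+b,\,a+c,\,b+c\leq 1$, which is exactly where the extremal construction lives. Your counting does localize any Hall failure to $s=1$ or $s=n$ (since the hypotheses force $a,b>1/n$ here, giving $m_{12}+m_{13}<2n-2$), but the rotation argument that follows is only a sketch. You do not show that a valid swap exists (that $y_0z_0\in E$ for at least one triangle $vy^*z^*$ through the bad vertex), nor that performing a swap cannot create a new bad vertex or a new bad pair, nor any potential function guaranteeing termination; the phrase ``should provide enough slack'' concedes this. The side remark that each bipartite part misses at least $n/2$ edges is also false: $a=0.89$, $b=c=0.1$ satisfies all three hypotheses and all three pairwise-sum constraints for $n>240$, yet $m_{12}=0.11n$.

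The paper's proof avoids rotations entirely by preprocessing the low-degree vertices. It shows that at most one vertex per bipartite pair has degree below $n/2$ and at most four have degree at most $4n/5$, and then (via Claim~\ref{triangle-cover} and a short case analysis, Claim~\ref{cover-small-degree-vertex}) covers all such vertices with at most twelve vertex-disjoint triangles. On the leftover graph $G'$ every vertex has degree exceeding $3n/4$ in both of its bipartite parts, so in the auxiliary bipartite graph $H$ every vertex on both sides has degree greater than $n/2$, and Hall's condition holds outright with no repairs needed. This preprocessing step is the idea your argument is missing.
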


\pf
By inequalities \eqref{factor-density}, one can see that 
\begin{equation}\label{factor-density2}
\rho\geq\frac{n(n-1)+1}{n^2} 
\end{equation}
for $\rho\in \{\alpha,\beta,\gamma\}$.

By inequality \eqref{factor-density2}, we have $e(G[V_i,V_{i+1}])\geq n(n-1)+1$ for $i\in [3]$. Thus $G[V_i,V_{i+1}]$ has a perfect matching for $i\in [3]$ by Lemma \ref{3-graph-matching}. Suppose that there is only one bipartite graph satisfying $e(G[V_i,V_{i+1}])>n(n+1)+1$. Without loss of generality, assume $i=1$. Then $e(G[V_1,V_3])=e(G[V_2,V_3]) = n(n-1)+1 $. It follows that $\gamma=\beta= \frac{n(n-1)+1}{n^2}$. By inequalities \eqref{factor-density}, $\alpha>1-\frac{1}{n^2}$. Therefore $e(G[V_1,V_2])>n^2-1$. That is, $G[V_1,V_2]$ is a complete bipartite graph. Thus $G$ contains a triangle-factor by Proposition \ref{density=1} and the proof is done. 
So we assume that there are at least two bipartite graphs satisfying $e(G[V_i,V_{i+1}])\geq n(n-1)+2 $ in the following proof. Since the degree of any vertex is at most $n$, we can derive that 
\begin{equation}\label{minimum-degree}
\delta(G[V_i,V_{i+1}])\geq 2
\end{equation}
for $i=1,3$ without loss of generality.

Let $S_{12}=\{v\in V_1\cup V_2: d_{12}(v)<  n/2\}$, $S_{23}=\{v\in V_2\cup V_3: d_{23}(v)<  n/2\}$ and $S_{13}=\{v\in V_1\cup V_3: d_{13}(v)<  n/2\}$. By inequality (\ref{factor-density2}), we have $|S_{12}|,|S_{23}|,|S_{13}|\leq 1$. 
\begin{cla}\label{triangle-cover}
	For each vertex $v\in V(G)$, there is a triangle containing $v$.
\end{cla}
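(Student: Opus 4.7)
The plan is to argue by contradiction and exploit how close each of $\alpha,\beta,\gamma$ is to $1$. Suppose a vertex $v$ of $G$ belongs to no triangle; by the symmetry between the three parts (the three inequalities in \eqref{factor-density} are the three cyclic relabellings of each other) it suffices to treat the case $v\in V_1$. Set $A=N_2(v)\subseteq V_2$ and $B=N_3(v)\subseteq V_3$. Since $v$ is in no triangle, no pair in $A\times B$ can be an edge of $G[V_2,V_3]$, and so all $|A||B|$ such pairs are non-edges. Counting non-edges of $G[V_2,V_3]$ then gives the upper bound
\[
d_{12}(v)\,d_{13}(v)=|A||B|\le (1-\gamma)n^2.
\]

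Next I would produce a matching lower bound by a simple degree-sum argument. Since every other vertex of $V_1$ contributes at most $n$ to $e(G[V_1,V_2])$,
\[
\alpha n^2=\sum_{u\in V_1}d_{12}(u)\le d_{12}(v)+(n-1)n,
\]
so $d_{12}(v)\ge n(\alpha n-(n-1))$, and symmetrically $d_{13}(v)\ge n(\beta n-(n-1))$. Multiplying these lower bounds and combining with the earlier upper bound yields
\[
(\alpha n-(n-1))(\beta n-(n-1))+\gamma\le 1,
\]
which directly contradicts the first inequality of \eqref{factor-density}. The cases $v\in V_2$ and $v\in V_3$ are handled identically, using the third and second inequalities of \eqref{factor-density} respectively.

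I do not anticipate any real obstacle: the argument consists of two crude counting bounds that meet exactly on the extremal construction given in the remark after Theorem \ref{main-theorem2}. There, the special vertex $x\in A$ has $d_{12}(x)=|B_1|=n(\alpha n-(n-1))$, $d_{13}(x)=|C_1|=n(\beta n-(n-1))$, and $|B_1|\cdot |C_1|=(1-\gamma)n^2$, so both inequalities above hold with equality and the density hypothesis in \eqref{factor-density} is exactly tight. The only thing to keep track of is the correct pairing of each vertex class with the corresponding inequality in \eqref{factor-density}, which is routine bookkeeping; in particular, the hypothesis $n>240$ and the minimum-degree bound $\delta(G[V_i,V_{i+1}])\ge 2$ established just before the claim play no role in this step.
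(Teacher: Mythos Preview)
Your proof is correct and is essentially the same argument as the paper's: the paper phrases it via the densities $\alpha',\beta',\gamma'$ of the reduced graph $G_1=G-(V_1\setminus\{v\})$, but unwinding those definitions gives exactly your degree-sum bound $d_{12}(v)\ge n(\alpha n-(n-1))$ and your non-edge count $|N_2(v)||N_3(v)|\le (1-\gamma)n^2$, leading to the same contradiction with \eqref{factor-density}. The only implicit step (in both versions) is that $\alpha n-(n-1)$ and $\beta n-(n-1)$ are positive before multiplying the two lower bounds, which follows from \eqref{factor-density2}.
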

\pf
Without loss of generality, we may assume that $v\in V_1$. Let $S=V_1\setminus \{v\}$ and $G_1=G-S$. Let $\alpha',\beta',\gamma'$ be the density of $G_1[\{v\},V_2]$, $G_1[\{v\},V_3]$ and $G_1[V_2,V_3]$, respectively. Thus $\alpha'\geq \frac{\alpha n^2-n(n-1)}{n}$, $\beta'\geq \frac{\beta n^2-n(n-1)}{n}$ and $\gamma'=\gamma$. By inequality (\ref{factor-density}), we can derive that $\alpha'\beta'+\gamma'>1$. It follows that $\alpha',\beta'>0$. That is, $|N_2(v)|,| N_3(v)|\geq 1$, where $N_2(v), N_3(v)$ denote the neighborhood of $v$ in $V_2$ and $V_3$, respectively.

In order to prove that $G_1$ has a triangle, we only need to show that there exists an edge in $G_1[N_2(v),N_3(v)]$. Suppose that $E(G_1[N_2(v),N_3(v)])=\emptyset$, then $\gamma'\leq \frac{n^2-|N_2(v)|\cdot |N_3(v)|}{n^2}$. Since $\alpha'=\frac{|N_2(v)|}{n}$, $\beta'=\frac{|N_3(v)|}{n}$, we can derive that $\alpha'\beta'+\gamma'\leq \frac{|N_2(v)|\cdot |N_3(v)|}{n^2}+\frac{n^2-|N_2(v)|\cdot |N_3(v)|}{n^2}=1$, a contradiction.
\qed

\begin{cla}\label{cover-small-degree-vertex}
	There are at most $3$ vertex-disjoint triangles containing all vertices in $S_{12}\cup S_{13}\cup S_{23}$.
\end{cla}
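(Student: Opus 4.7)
My plan is to construct the covering family greedily, using the fact that $|L|:=|S_{12}\cup S_{13}\cup S_{23}|\leq |S_{12}|+|S_{13}|+|S_{23}|\leq 3$. Pick any $v_1\in L$ and apply Claim \ref{triangle-cover} to obtain a triangle $T_1\ni v_1$; if $L\setminus V(T_1)\neq\emptyset$, pick $v_2\in L\setminus V(T_1)$ and find a triangle $T_2\ni v_2$ disjoint from $T_1$; repeat once more for $v_3$ if needed. Since each selected triangle covers a previously uncovered element of $L$, the process terminates after at most three iterations, giving the desired family.

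The core technical step is to show that the existence argument of Claim \ref{triangle-cover} still works after removing one or two previously chosen triangles. For the next vertex $v\in L$ to be covered, say $v\in V_i$ with $\{i,j,k\}=\{1,2,3\}$, I would rerun that calculation inside the restricted graph $G_r:=G[\{v\}\cup(V_j\setminus U)\cup(V_k\setminus U)]$, where $U$ is the vertex set of previously selected triangles (so $|U\cap V_j|,|U\cap V_k|\leq 2$). The updated quantities satisfy $|N(v)\cap(V_j\setminus U)|\geq d_{ij}(v)-|U\cap V_j|$ (bounded below using the minimum-degree condition \eqref{minimum-degree} and, when $v\notin S_{ij}$, the estimate $d_{ij}(v)\geq n/2$), while $|E(G[V_j\setminus U,V_k\setminus U])|\geq \gamma n^2-(|U\cap V_j|+|U\cap V_k|)n$. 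Plugging these into the counting inequality used in the proof of Claim \ref{triangle-cover} and invoking the density hypothesis \eqref{factor-density}, the resulting lower bound on the number of triangles through $v$ in $G_r$ remains strictly positive.

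The main obstacle is the extremal case in which $d_{ij}(v)=2$ (the tight case of \eqref{minimum-degree}) and the two $V_j$-neighbors of $v$ both happen to lie in the previously chosen triangles, annihilating every triangle through $v$ in $G_r$. To sidestep this, I would order the greedy selection so that any such \emph{tight} vertex is processed first, before its scant neighborhood can be depleted; alternatively, when choosing each earlier $T_\ell$, I would exploit the abundance of triangles through any non-tight vertex of $L$ (a vertex with $d_{ij}\geq n/2$ in both other classes has $\Omega(n^2)$ triangles through it by the same computation) to direct the auxiliary $V_j$-vertex of $T_\ell$ away from $N(v)\cap V_j$. Since $|S_{12}|,|S_{13}|,|S_{23}|\leq 1$, at most one tight constraint arises per pair of classes, so these local constraints are simultaneously satisfiable and the greedy construction succeeds within three steps.
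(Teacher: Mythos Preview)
Your high-level greedy strategy matches the paper's, but the core technical step---``rerun the counting inequality of Claim~\ref{triangle-cover} in $G_r$''---does not work. That inequality is $(\alpha n-(n-1))(\beta n-(n-1))+\gamma>1$, and each factor on the left can be as small as $1/n$; the conclusion is only that \emph{one} edge sits between $N_j(v)$ and $N_k(v)$. Removing a single vertex of $U$ from $N_j(v)$ can kill that edge, and deleting $O(n)$ edges from $E(G[V_j,V_k])$ shifts the effective $\gamma$ by $O(1/n)$, which is already the entire margin in \eqref{factor-density}. So the ``plugging in'' you describe yields nothing once $U\neq\emptyset$. Your ordering fix handles at most one tight vertex; with $|L|=3$ the remaining two still face the same fragility. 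The ``avoidance'' alternative presupposes a supply of many triangles through the earlier $v_\ell$, which Claim~\ref{triangle-cover} does not provide (it may give exactly one). Note also that \eqref{minimum-degree} only covers two of the three bipartite graphs, so your appeal to it for an arbitrary pair $ij$ is not justified.

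The paper never reruns Claim~\ref{triangle-cover}. For the second and third vertices it uses a different mechanism: since $|S_{ij}|\leq 1$ for each pair, the next vertex $y$ lies in at most one $S$-set, say $S_{23}$. One first picks an edge $yz\in E(G[V_2,V_3])$ avoiding the earlier triangle(s); this needs $d_{23}(y)\geq 2$ (or $\geq 3$ at the third step), and the paper separately disposes of the degree-$1$ case in $G[V_2,V_3]$, which \eqref{minimum-degree} does not cover. Then both endpoints $y_2,y_3$ of that edge are outside $S_{12}$ and $S_{13}$ respectively, so each has degree at least $n/2$ into $V_1$, and a common-neighbour count in $V_1$ produces enough candidates to dodge the previously used vertices. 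This common-neighbour argument, together with the careful ordering (process the vertex with the smallest relevant degree first) and the verification that the third vertex has degree at least $3$ on its low side, is the missing ingredient in your sketch.
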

\pf
For the case $|S_{12}\cup S_{13}\cup S_{23}|\leq 1$, The conclusion holds by Claim \ref{triangle-cover}. So we assume $2\leq |S_{12}\cup S_{13}\cup S_{23}|\leq 3$ in the following proof. 
Suppose that there is a vertex $x$ such that $x$ has degree one in some $G[V_i,V_{i+1}]$. Then $x\in S_{23}$ by inequality \eqref{minimum-degree}. Without loss of generality, we assume that there exists a vertex $y\in S_{13}\setminus\{x\}$. That is $d_{13}(y)< n/2$. Thus
\begin{equation} 
	\alpha>1-(\gamma n-(n-1))(\beta n-(n-1))\geq1-\frac{1}{2n}.
\end{equation}
It follows that $e(G[V_1,V_2])> n^2-n/2$. Thus $S_{12}=\emptyset$ and  $|S_{12}\cup S_{13}\cup S_{23}|=2$.
By Claim \ref{triangle-cover} there is a triangle $T_1$ containing $x$. If $y\in T_1$, then the proof is done, so we may assume that $y\notin T_1$. Since $d_{13}(y)\geq 2$, there is an edge $yz\in E(G[V_1,V_3])$  such that $V(T_1)\cap \{y,z\}=\emptyset$. Let $\{y,z\}= \{y_1,y_3\}$ such that $y_i\in V_i$ for $i=1,3$. Since $e(G[V_2,V_3])=n(n-1)+1$ and $d_{23}(x)=1$, one can see that $d_{23}(y_3)=n$. Note that $d_{12}(y_1)\geq 2$. Thus $|N_2(y_1)\cap N_2(y_3)|\geq d_{12}(y_1)+d_{23}(y_3)-n\geq 2$, where $N_2(y_1), N_2(y_3)$ denote the neighborhood of $y_1,y_3$ in $V_2$. So there is a triangle disjoint from $T_1$ containing $y$ and the proof is done. So from now on we assume that $\delta(G[V_i,V_{i+1}])\geq 2$ for $i\in [3]$. 

\medskip
\noindent\textbf{Case 1.} $|S_{12}\cup S_{13}\cup S_{23}|=2$

Since $|S_{12}|, |S_{13}|, |S_{23}|\leq 1$, there exists a vertex $y$ belongs to only one of $S_{12},S_{13},S_{23}$. Let $\{x,y\}= S_{12}\cup S_{13}\cup S_{23}$. Without loss of generality, we assume that $y\in S_{23}\setminus (S_{12}\cup S_{13})$ and $x\in S_{12}$. 
By Claim \ref{triangle-cover} there is a triangle $T_1$ containing $x$. If $y\in T_1$, then the proof is done, so we may assume that $y\notin T_1$. Since $d_{23}(y)\geq 2$, there is an edge $yz\in E(G[V_2,V_3])$ such that $V(T_1)\cap \{y,z\}=\emptyset$. Let $\{y,z\}= \{y_2,y_3\}$ such that $y_i\in V_i$ for $i=2,3$. Since $d_{12}(x)< n/2$, $d_{12}(y_2)> n/2+1$. Recall that $y,z\notin S_{13}$, thus $d_{13}(y_3)\geq n/2$. Therefore $|N_1(y_2)\cap N_1(y_3)|\geq d_{12}(y_2)+d_{13}(y_3)-n\geq 2$. So there is a triangle disjoint from $T_1$ containing $y$ and the proof is done. 

\medskip
\noindent\textbf{Case 2.} $|S_{12}\cup S_{13}\cup S_{23}|=3$.

Let $x\in S_{12}$, $y\in S_{23}$ and $z\in S_{13}$. Without loss of generality, we assume that $d_{12}(x)\leq d_{23}(y)\leq d_{13}(z)$. We claim that $d_{13}(z)\geq 3$. Otherwise if $d_{12}(x)\leq d_{23}(y)\leq d_{13}(z)\leq 2$, then
$$(\alpha n-(n-1))(\beta n-(n-1))+\gamma<1,$$
a contradiction. 
By Claim \ref{triangle-cover} there is a triangle $T_1$ containing $x$. If $y,z\in T_1$, then the proof is done, so we may assume that $y\notin T_1$. Since $d_{23}(y)\geq 2$, there is an edge $yu\in E(G[V_2,V_3])$ such that $V(T_1)\cap \{y,u\}=\emptyset$. Let $\{y,u\}= \{y_2,y_3\}$ such that $y_i\in V_i$ for $i=2,3$. Since $d_{12}(x) < n/2$, $d_{12}(y_2)> n/2+1$. Therefore $|N_1(y_2)\cap N_1(y_3)|\geq d_{12}(y_2)+d_{13}(y_3)-n\geq 2$ since $d_{13}(y_3)\geq n/2$. So there is a triangle $T_2$ disjoint from $T_1$ containing $y$. If $z\in T_1\cup T_2$, then the proof is done, so we may assume that $z\notin T_1\cup T_2$. Since $d_{13}(z)\geq 3$, there is an edge $zw\in E(G[V_1,V_3])$ such that $(V(T_1)\cup V(T_2))\cap \{z,w\}=\emptyset$. Let $\{z,w\}= \{z_1,z_3\}$ such that $z_i\in V_i$ for $i=1,3$. Since $d_{12}(x) < n/2$ and $d_{23}(y) < n/2$, we have $d_{12}(z_1)> n/2+1$ and $d_{23}(z_3)> n/2+1$. Thus $|N_2(z_1)\cap N_2(z_3)|\geq d_{12}(z_1)+d_{23}(z_3)-n>2$. So there is a triangle $T_3$ disjoint from $T_1\cup T_2$ containing $z$.
\qed

By Claim \ref{cover-small-degree-vertex}, there is a  set of vertex-disjoint triangles $\mathcal{T}_1$ such that $S_{12}\cup S_{23}\cup S_{13}\subseteq V(\mathcal{T}_1)$ and $|\mathcal{T}_1|\leq 3$.
Let $S'_{12}=\{v\in V_1\cup V_2: d_{12}(v)\leq 4n/5\}$, $S'_{23}=\{v\in V_2\cup V_3: d_{23}(v)\leq 4n/5\}$ and $S'_{13}=\{v\in V_1\cup V_3: d_{13}(v)\leq 4n/5\}$. Note that $|S'_{12}|,|S'_{13}|,|S'_{23}|\leq 4$. For any vertex $v\in (S'_{12}\cup S'_{23}\cup S'_{13})\setminus (S_{12}\cup S_{23}\cup S_{13})$, suppose that $v\in V_1$, there are at least $24n+1$ triangles containing $v$ since $d_{12}(v),d_{13}(v)\geq n/2$ and $e(G[N_2(v),N_3(v)])>d_{12}(v)\cdot d_{13}(v)-n>24n$. Let $\mathcal{T}_2=\{T_1,T_2,\ldots,T_{\ell}\}$ be a maximum set of vertex-disjoint triangles disjoint from $V(\mathcal{T}_1)$ such that each $T_i$ containing exactly one vertex in $(S'_{12}\cup S'_{23}\cup S'_{13})\setminus (S_{12}\cup S_{23}\cup S_{13})$. Note that $\ell+|\mathcal{T}_1|\leq 12$ and there are at most $24n$ triangles containing $v$ and intersecting some triangle in $\mathcal{T}_1\cup \mathcal{T}_2$ for any vertex $v\in (S'_{12}\cup S'_{23}\cup S'_{13})\setminus (S_{12}\cup S_{23}\cup S_{13})$. We claim that $(S'_{12}\cup S'_{23}\cup S'_{13})\setminus (S_{12}\cup S_{23}\cup S_{13})\subseteq V(\mathcal{T}_2)$, otherwise there is a vertex $u\in (S'_{12}\cup S'_{23}\cup S'_{13})\setminus (S_{12}\cup S_{23}\cup S_{13})$ such that $u\notin V(\mathcal{T}_2)$, but there exists a triangle $T'$ containing $u$ such that $V(T')\cap V(\mathcal{T}_1\cup\mathcal{T}_2)=\emptyset$, a contradiction. 

Let $G'=G-V(\mathcal{T}_1\cup\mathcal{T}_2)$. Since $n>240$, every vertices in $G'$ has degree at least $4n/5-12>3n/4$. Thus there is a perfect matching $M$ in $G[V_1,V_2]$ by Hall's Theorem. Let $M=\{u_1v_1,u_2v_2,\ldots,u_nv_n\}$. Define $H$ to be a bipartite graph with vertex parts $M$  and $V_3$. Define $E(H)=\{ex:e\in M, x\in V_3 \ \text{and} \ ev \ \text{is a triangle}\}$. For $uv\in M$, since $\delta(G')> 3n/4$, we have $d_{H}(e)>3n/4+3n/4-n=n/2 $. Similarly, we can derive that $d_{H}(x)>n/2$ for $x\in V_3$.  By Hall's Theorem, there is a  perfect matching in $H$. That is, there is a triangle-factor in $G'$. Thus $G$ contains a triangle-factor.
\qed

%----------------------------------------------------------------------------------------------------------------------------------------------------------------------------------
\bibliographystyle{amsalpha}

\begin{thebibliography}{FRMZ21}

\bibitem[ABHP15]{ABHP15}
Peter Allen, Julia B{\"o}ttcher, Jan Hladk{\`y}, and Diana Piguet, \emph{A
  density {C}orr{\'a}di--{H}ajnal theorem}, Canad. J. Math. \textbf{67} (2015),
  no.~4, 721--758.

\bibitem[AH17]{AH17}
Ron Aharoni and David Howard, \emph{A rainbow {$r$}-partite version of the
  {E}rd{\H o}s-{K}o-{R}ado theorem}, Combin. Probab. Comput. \textbf{26}
  (2017), no.~3, 321--337. \MR{3628907}

\bibitem[BFRS24]{MR4831824}
Leila Badakhshian, Victor Falgas-Ravry, and Maryam Sharifzadeh, \emph{On
  density conditions for transversal trees in multipartite graphs}, Electron.
  J. Combin. \textbf{31} (2024), no.~4, Paper No. 4.51, 27. \MR{4831824}

\bibitem[BJT10]{BJT10}
Rahil Baber, J.~Robert Johnson, and John Talbot, \emph{The minimal density of
  triangles in tripartite graphs}, LMS J. Comput. Math. \textbf{13} (2010),
  388--413. \MR{2685132 (2011g:05254)}

\bibitem[BSTT06]{BSTT06}
Adrian Bondy, Jian Shen, St{\'e}phan Thomass{\'e}, and Carsten Thomassen,
  \emph{Density conditions for triangles in multipartite graphs}, Combinatorica
  \textbf{26} (2006), no.~2, 121--131. \MR{2223630 (2007a:05062)}

\bibitem[CN12]{MR2942728}
P\'eter Csikv\'ari and Zolt\'an~L\'or\'ant Nagy, \emph{The density {T}ur\'an
  problem}, Combin. Probab. Comput. \textbf{21} (2012), no.~4, 531--553.
  \MR{2942728}

\bibitem[Erd55]{MR81469}
Paul Erd{\H o}s, \emph{Some theorems on graphs}, Riveon Lematematika \textbf{9}
  (1955), 13--17. \MR{81469}

\bibitem[Erd62]{E62}
\bysame, \emph{{\"U}ber ein extremalproblem in der graphentheorie}, Archiv der
  Mathematik \textbf{13} (1962), no.~1, 222--227.

\bibitem[FRMZ21]{FMZ21}
Victor Falgas-Ravry, Klas Markstr\"{o}m, and Yi~Zhao, \emph{Triangle-degrees in
  graphs and tetrahedron coverings in 3-graphs}, Combin. Probab. Comput.
  \textbf{30} (2021), no.~2, 175--199. \MR{4225783}

\bibitem[HHLZ25]{HHLZ25}
Jianfeng Hou, Caiyun Hu, Xizhi Liu, and Yixiao Zhang, \emph{Density
  {H}ajnal--{S}zemer{\'e}di theorem for cliques of size four}, arXiv preprint
  arXiv:2501.00801 (2025).

\bibitem[KLPS20]{MR4196784}
Jaehoon Kim, Hong Liu, Oleg Pikhurko, and Maryam Sharifzadeh, \emph{Asymptotic
  structure for the clique density theorem}, Discrete Anal. (2020), Paper No.
  19, 26. \MR{4196784}

\bibitem[LPS20]{MR4089395}
Hong Liu, Oleg Pikhurko, and Katherine Staden, \emph{The exact minimum number
  of triangles in graphs with given order and size}, Forum Math. Pi \textbf{8}
  (2020), e8, 144. \MR{4089395}

\bibitem[Moo68]{M68}
John~W Moon, \emph{On independent complete subgraphs in a graph}, Canadian J.
  Math. \textbf{20} (1968), 95--102.

\bibitem[MT21]{MT21}
Klas Markstr\"{o}m and Carsten Thomassen, \emph{Partite {T}ur\'{a}n-densities
  for complete {$r$}-uniform hypergraphs on {$r+1$} vertices}, J. Comb.
  \textbf{12} (2021), no.~2, 235--245. \MR{4290615}

\bibitem[Nag11]{MR2776822}
Zolt\'an~L\'or\'ant Nagy, \emph{A multipartite version of the {T}ur\'an
  problem---density conditions and eigenvalues}, Electron. J. Combin.
  \textbf{18} (2011), no.~1, Paper 46, 15. \MR{2776822}

\bibitem[NT17]{MR3647823}
Lothar Narins and Tuan Tran, \emph{A density {T}ur\'an theorem}, J. Graph
  Theory \textbf{85} (2017), no.~2, 496--524. \MR{3647823}

\bibitem[Pfe12]{MR2965288}
Florian Pfender, \emph{Complete subgraphs in multipartite graphs},
  Combinatorica \textbf{32} (2012), no.~4, 483--495. \MR{2965288}

\bibitem[Raz08]{R2}
Alexander~A. Razborov, \emph{On the minimal density of triangles in graphs},
  Combin. Probab. Comput. \textbf{17} (2008), no.~4, 603--618. \MR{2433944
  (2009i:05118)}

\bibitem[Rei16]{MR3549620}
Christian Reiher, \emph{The clique density theorem}, Ann. of Math. (2)
  \textbf{184} (2016), no.~3, 683--707. \MR{3549620}

\bibitem[Sim68]{S68}
Mikl{\'o}s Simonovits, \emph{A method for solving extremal problems in graph
  theory, stability problems}, Theory of Graphs (Proc. Colloq., Tihany, 1966),
  1968, pp.~279--319.

\end{thebibliography}
\providecommand{\bysame}{\leavevmode\hbox to3em{\hrulefill}\thinspace}
\providecommand{\MR}{\relax\ifhmode\unskip\space\fi MR }
% \MRhref is called by the amsart/book/proc definition of \MR.
\providecommand{\MRhref}[2]{%
  \href{http://www.ams.org/mathscinet-getitem?mr=#1}{#2}
}
\providecommand{\href}[2]{#2}

\end{document}